\theoremstyle{definition}
\newtheorem{remark}{Remark}
\newtheorem{property}{Property}
\theoremstyle{plain}
\newtheorem{lemma}{Lemma}
\newtheorem{theorem}{Theorem}
\newtheorem{result}{Result}
\newtheorem{corollary}{Corollary}
\DeclareMathOperator{\trace}{Tr}
\DeclareMathOperator{\PGL}{PGL}
\DeclareMathOperator{\PTL}{P\Gamma{L}}
\DeclareMathOperator{\PG}{PG}
\DeclareMathOperator{\AG}{AG}
\DeclareMathOperator{\F}{\mathbb{F}}
\DeclareMathOperator{\U}{\mathcal{U}_{BT}}
\date{}
\title{On the Equivalence, Stabilisers, and Feet \\of Buekenhout-Tits
	Unitals}
\author{Jake Faulkner\and Geertrui Van de Voorde\thanks{This author is supported by the Marsden Fund Council administered by the Royal Society of New Zealand.}}
\begin{document}

\maketitle
\begin{abstract}
	This paper addresses a number of problems concerning Buekenhout-Tits unitals
	in \(\PG(2, q^2)\), where \(q = 2^{2e + 1}\) and \(e \geq 1\). We show that all
	Buekenhout-Tits unitals are \(\PGL\)-equivalent (addressing an open problem
	in~\cite{barwick09_unital_projec_planes}), explicitly describe their
	\(\PTL\)-stabiliser (expanding Ebert's work
	in~\cite{ebert97_bueken_tits_unital}), and show that lines meet the feet of
	points not on \(\ell_{\infty}\) in at most four points. Finally, we show that feet
	of points not on \(\ell_{\infty}\) are not always a \(\{0, 1, 2, 4\}\)-set, in
	contrast to what happens for Buekenhout-Metz
	unitals~\cite{abarz_metz_feet}.
\end{abstract}

\noindent
MSC2020: 51E20\\
Keywords: Unital, Tits ovoid, Buekenhout-Tits unital, feet

\section{Introduction}
\subsection{Background}

Let \(\PG(2, q^2)\) denote the Desarguesian projective plane over the finite field with $q^2$ elements, $\F_{q^2}$, where $q$ is a prime power. A \textit{unital} \(U\) in \(\PG(2, q^2)\) is a set of \(q^{3} + 1\) points  such
that every line of \(\PG(2, q^{2})\) meets \(U\) in 1 or \(q + 1\) points. The \textit{classical} or \textit{Hermitian} unital, usually denoted by $\mathcal{H}(2,q^2)$, arises by taking the absolute points of a non-degenerate Hermitian polarity. Each point $P$ not lying on a unital $U$, lies on \(q + 1\) tangent lines to \(U\); the \(q + 1\)
points of \(U\) whose tangent lines contain \(P\) are called the \textit{feet} of \(P\),
and are denoted by \(\tau_P(U)\).

It is well-known that \(\PG(2, q^2)\) can be modelled using a Desarguesian line
spread of \(\PG(3, q)\) embedded in \(\PG(4, q)\) via the \textit{Andr\'{e}/Bruck-Bose
(ABB)} construction. A wide class of unitals in $\PG(2,q^2)$, called \textit{Buekenhout unitals},
arise as follows from the ABB construction; starting in \(\PG(4, q)\) fixing a hyperplane
\(\Sigma\), and a Desarguesian spread of \(\Sigma\), we take any ovoidal cone
\(\mathcal{C}\) such that \(\mathcal{C} \cap \Sigma\) is a spread line of $\Sigma$. Then in
\(\PG(2, q^2)\), \(\mathcal{C}\) gives rise to a unital \(U\). If the base of
\(\mathcal{C}\) is an elliptic quadric, the unital is called a \textit{Buekenhout-Metz
	unital}. The family of Buekenhout-Metz unitals contains the Hermitian unitals, but there are many non-equivalent Buekenhout-Metz unitals (see \cite{Baker1992}, \cite{Ebert1992}).
	 If \(q = 2^{2e + 1}\), \(e \geq 1\), and the base of \(\mathcal{C}\) is
a Tits ovoid, the unital is a called a \textit{Buekenhout-Tits unital}. For more
information on unitals and their constructions, see~\cite{barwick09_unital_projec_planes}.

Unitals can be characterised by the combinatorial properties of their feet. It is easy to see that for the classical unital $\mathcal{H}(2,q^2)$, the feet of a point not on the unital are always collinear.
Thas~\cite{Thas1992} showed the converse, namely, that a unital \(U\) is classical if and only if for all points, not on $U$, the feet are collinear. This was improved by Aguglia and Ebert \cite{MR1888419} who showed that a unital \(U\) is classical if and only if there exist two tangent
lines \(\ell_{1}, \ell_{2}\) such that for all points \(P \in (\ell_1 \cup \ell_2) \setminus U\) the feet of \(P\) are collinear.
It is known (see e.g. \cite{barwick09_unital_projec_planes}) that if \(U\) is a non-classical Buekenhout-Metz
unital, the feet of a point \(P \notin U\) are collinear if and only if
\(P \in \ell_{\infty}\). Furthermore, it is shown in \cite{abarz_metz_feet} that if \(U\) is Buekenhout-Metz unital, a line
meets the feet of a point \(P \notin \ell_{\infty}\) in either $0$, $1$, $2$, or $4$
points. Ebert~\cite{ebert97_bueken_tits_unital} showed for a Buekenhout-Tits
unital, the feet of \(P \notin U\) are collinear if and only if
\(P \in \ell_{\infty}\). It is then natural to ask how a line may meet the feet
of a point \(P \notin \ell_{\infty}\) for Buekenhout-Tits unitals. We will answer this question in Theorem \ref{thm:bt-unital-line-meet-feet}.

Many characterisations of unitals make use of their stabilisers in $\PGL$, resp. $\PTL$.
In~\cite{cossidente2000group} it is shown that a unital is classical if its stabiliser contains a cyclic group of order \(q^{2} - q + 1\). Several other
characterisations of unitals by their stabiliser group are listed
in~\cite{barwick09_unital_projec_planes}. In~\cite{ebert97_bueken_tits_unital},
Ebert determined the stabiliser of a Buekenhout-Tits unital in \(\PGL(3, q^{2})\) (see Result \ref{result:ebert-stab}). We will extend this work in this paper.

\subsection{Summary of this paper}
In this paper we present three main results:
\begin{enumerate}
	\item We show that all Buekenhout-Tits unitals are projectively equivalent (see Theorem \ref{cor:proj-equivalence}).
	      This addresses an open problem in~\cite{barwick09_unital_projec_planes}, and is
	      alluded to in~\cite{feng19_exist_onan_config_ovoid_bueken} (see
	      Remark~\ref{rem:equivalence}).
	\item A description of the full stabiliser group of a Buekenhout-Tits unital
	      in \(\PTL(3, q^{2})\) (see Theorem \ref{thm:bt-unital-pgammal-stab}). Ebert~\cite{ebert97_bueken_tits_unital} only provides a
	      description of stabiliser of the Buekenhout-Tits unital in \(\PGL\)
	      (Result~\ref{result:ebert-stab}). The stabiliser of the classical unital in
	      \(\PTL(3, q^2)\) is \(\mathrm{P\Gamma U}(3, q^2)\), and the stabiliser of the
	      Buekenhout-Metz unital in \(\PTL(3, q^{2})\) is described in~\cite{Ebert1992}
	      for \(q\) even and~\cite{Baker1992} for \(q\) odd.
	\item If \(U\) is a Buekenhout-Tits unital, then a line \(\ell\) meets the
	      feet of a point \(P \notin (\ell_{\infty} \cup U)\) in at most 4 points.
	      Moreover, there exists a point \(P\) and line \(\ell\) such that the feet of
	      \(P\) meet \(\ell\) in exactly three points (see Theorem \ref{thm:bt-unital-line-meet-feet}). This highlights a difference
	      between Buekenhout-Metz unitals and Buekenhout-Tits unitals. It also
	      solves an open problem posed by Aguglia and Ebert~\cite{MR1888419} and later listed
	      in~\cite{barwick09_unital_projec_planes}.
\end{enumerate}

\subsection{Coordinates for a Buekenhout-Tits unital}\label{sec:preliminaries}
In~\cite{ebert97_bueken_tits_unital}, Ebert derives coordinates for a
Buekenhout-Tits unital $\U$ in \(\PG(2, q^{2})\), $q=2^{2e+1}$. Pick
\(\epsilon \in \mathbb{F}_{q^2}\) such that \(\epsilon^q = \epsilon + 1\), and
\(\epsilon^2 = \epsilon + \delta\) for some \(1 \neq \delta \in \mathbb{F}_q\)
with absolute trace equal to one. Then the following set of points in
\(\PG(2, q^2)\) is a Buekenhout-Tits unital,
\begin{equation}
	\label{eq:bt-definition}
	\U = \{(0, 0, 1)\} \cup \{P_{r, s, t} = (1, s + t \epsilon, r + (s^{\sigma + 2} + t^{\sigma} + st)\epsilon)\,|\, r, s, t \in \mathbb{F}_{q}\},
\end{equation}
where \(\sigma = 2^{e + 1}\) has the property that $\sigma^2$ induces the automorphism $x\mapsto x^2$ of $\F_{q^2}$. In addition, it can be verified that \(\sigma + 1\), \(\sigma + 2\), \(\sigma - 1\), and \(\sigma - 2\) all
induce permutations of \(\mathbb{F}_q\) with inverses induced by \(\sigma - 1\),
\(1 - \sigma/2\), \(\sigma + 1\) and \(-(\sigma/2 + 1)\) respectively.

The following theorem describes the group of projectivities (that is, elements of $\PGL(3,q^2)$) stabilising \(\U\).
\begin{result}\cite[Theorem 4]{ebert97_bueken_tits_unital}\label{result:ebert-stab}
	Let \(G=\PGL(3,q^2)_{\U}\), $q=2^{2e+1}$, be the group of projectivities stabilising the Buekenhout-Tits unital \(\U\). Then \(G\)
	is an abelian group of order \(q^{2}\), consisting of the projectivities induced by
	the matrices
	\begin{equation}
		\label{eq:Muv}
		M_{u,v} = \left\{\index{$\begin$}\begin{bmatrix}
			1 & u \epsilon & v + u^{\sigma} \epsilon \\
			0 & 1          & u + u \epsilon          \\
			0 & 0          & 1
		\end{bmatrix}\,\middle|\,u,v \in \mathbb{F}_{q}\right\},
	\end{equation}
	where $\sigma=2^{e+1}$ and matrices act on the homogeneous coordinates of points by
	multiplication from the right.
\end{result}
\section{On the Projective Equivalence of Buekenhout-Tits Unitals}
In this section, we show that all Buekenhout-Tits unitals are projectively
equivalent to the unital $\U$ given in equation~\eqref{eq:bt-definition}.
\begin{remark}\label{rem:equivalence}
	The authors of~\cite{feng19_exist_onan_config_ovoid_bueken} give this result
	without proof and state it can be derived using the same techniques employed by Ebert
	in~\cite{ebert97_bueken_tits_unital}. Ebert however, lists the equivalence of
	Buekenhout-Tits unital as an open problem
	in~\cite{barwick09_unital_projec_planes} which appeared about ten years after his original
	paper~\cite{ebert97_bueken_tits_unital}.
\end{remark}
It is easy to see that the Buekenhout-Tits unital $\U$ is tangent to the
line \(\ell_{\infty} : x = 0\) in the point \(P_{\infty} = (0, 0, 1)\). From the
ABB construction it follows that \(P_{\infty}\) has the following property with respect to $\U$.
\begin{property}\label{prop:subline-property}
	Given any unital \(U\), a point \(P \in U\) has
	Property~\ref{prop:subline-property} if all secant lines through \(P\) meet
	\(U\) in Baer sublines.
\end{property}
It is shown in~\cite{Barwick2001} that if two different points of \(U\) have Property~\ref{prop:subline-property}, then $U$ is classical. Hence, the point $P_\infty$ is the unique point of
$\U$ admitting this property.
We will count all Buekenhout-Tits unitals tangent to
\(\ell_{\infty}\) at a point \(P_{\infty}\) having
Property~\ref{prop:subline-property}.

\begin{lemma}\label{thm:unitals-equivalent-in-plane}
	There are \(q^{4} {(q^2 - 1)}^2\) unitals projectively equivalent to \(\U\)
	in \(\PG(2, q^2)\) tangent to \(\ell_{\infty}\, :\,x = 0\), and containing the
	point \(P_{\infty} = (0, 0, 1)\) with Property~\ref{prop:subline-property}.
\end{lemma}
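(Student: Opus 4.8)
The plan is to set this up as an orbit--stabiliser count. Write $X_0$ for the set of unitals projectively equivalent to $\U$ that are tangent to $\ell_\infty\colon x=0$ and contain $P_\infty=(0,0,1)$ as a point enjoying Property~\ref{prop:subline-property}; since a tangent line meets a unital in a single point, $\ell_\infty$ is forced to be the tangent line of every member of $X_0$ at $P_\infty$. Let $\Gamma=\PGL(3,q^2)_{(P_\infty,\ell_\infty)}$ denote the stabiliser in $\PGL(3,q^2)$ of the flag $(P_\infty,\ell_\infty)$. The whole computation rests on two facts: that $\Gamma$ acts transitively on $X_0$, and that the $\Gamma$-stabiliser of $\U$ is exactly the group $G$ of order $q^2$ from Result~\ref{result:ebert-stab}.

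For transitivity, take $U_1,U_2\in X_0$ and a projectivity $g$ with $g(U_1)=U_2$, which exists by projective equivalence. This is where the cited result of~\cite{Barwick2001} does the work: every member of $X_0$ is non-classical (being equivalent to $\U$), so by Property~\ref{prop:subline-property} and the fact that two distinct property-points force classicality, $P_\infty$ is the \emph{unique} point of each of $U_1$ and $U_2$ admitting the property. Because projectivities carry Baer sublines to Baer sublines, they carry property-points to property-points and tangent lines to tangent lines; hence $g$ must fix $P_\infty$ and send the tangent $\ell_\infty$ of $U_1$ at $P_\infty$ to the tangent $\ell_\infty$ of $U_2$ at $P_\infty$, so $g\in\Gamma$. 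Thus $\Gamma$ is transitive on $X_0$, and by the orbit--stabiliser theorem $|X_0|=|\Gamma|/|\Gamma_{\U}|$.

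It remains to identify $\Gamma_{\U}$ and to compute $|\Gamma|$. A direct check on the matrices $M_{u,v}$ of Result~\ref{result:ebert-stab} shows $(0,0,1)M_{u,v}=(0,0,1)$ and that $M_{u,v}$ preserves $\{x=0\}$, so $G\le\Gamma$; since $G=\PGL(3,q^2)_{\U}$ is already the full projective stabiliser of $\U$, we get $\Gamma_{\U}=G$, of order $q^2$. For $|\Gamma|$ I would invoke the flag-transitivity of $\PGL(3,q^2)$ on $\PG(2,q^2)$: there are $(q^4+q^2+1)(q^2+1)$ flags, so $|\Gamma|=|\PGL(3,q^2)|/\bigl[(q^4+q^2+1)(q^2+1)\bigr]$. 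Using $|\PGL(3,q^2)|=q^6(q^6-1)(q^4-1)$ together with the factorisations $q^6-1=(q^2-1)(q^4+q^2+1)$ and $q^4-1=(q^2-1)(q^2+1)$, the flag factor cancels and $|\Gamma|=q^6(q^2-1)^2$. Dividing by $|G|=q^2$ yields $|X_0|=q^4(q^2-1)^2$, as claimed.

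The conceptual crux --- and really the only nontrivial ingredient --- is the uniqueness of the property-point, which pins down the flag $(P_\infty,\ell_\infty)$ and lets me replace the full group $\PGL(3,q^2)$ by the much smaller flag stabiliser $\Gamma$; once that reduction is in place, the remainder is the routine group-order arithmetic sketched above. I would take care to record explicitly that $\U$ itself lies in $X_0$, so the orbit is nonempty, and that $\Gamma$ preserves $X_0$, since Property~\ref{prop:subline-property} and tangency at $P_\infty$ are projective invariants fixed by every element of the flag stabiliser.
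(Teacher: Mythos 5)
Your proof is correct and takes essentially the same approach as the paper: both reduce the count to the stabiliser $\Gamma$ (called $H$ in the paper) of the flag $(P_\infty,\ell_\infty)$ using the uniqueness of the point with Property~\ref{prop:subline-property}, identify $\Gamma_{\U}$ with the group $G$ of Result~\ref{result:ebert-stab}, and apply the orbit--stabiliser theorem. The only cosmetic difference is that the paper obtains $|\Gamma|=q^6(q^2-1)^2$ by exhibiting its elements as explicit upper-triangular matrices, whereas you obtain the same order by dividing $|\PGL(3,q^2)|$ by the number of flags of $\PG(2,q^2)$.
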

\begin{proof}
  First note that any projectivity mapping $\U$ to a unital tangent to
  $\ell_\infty$ in $P_\infty$ necessarily is contained in the group \(H\) of
  projectivities fixing \(\ell_{\infty}\) line-wise and \(P_{\infty}\) point-wise.
  The elements of $H$ are induced by all matrices of the following form,
  \begin{equation*}
	\begin{bmatrix}
	  1 & x_{12} & x_{13} \\
	  0 & x_{22} & x_{23} \\
	  0 & 0      & x_{33}
	\end{bmatrix},
  \end{equation*}
  where \(x_{22} x_{33} \neq 0\) and matrices act on homogeneous coordinates by
  multiplication on the right. It follows that \(|H|={(q^2 - 1)}^2 q^6\).
  Furthermore, from the description of \(G=\PGL(3,q^2)_{\U}\) in
  Result~\ref{result:ebert-stab}, we know that \(H_{\U}=G\), and hence,  \(H_{\U}\)
  has order \(q^{2}\). By the orbit-stabiliser theorem, we find that there are
  \({(q^2 - 1)}^2 q^4\) unitals in the orbit of \(\U\) under \(H\).
\end{proof}

Consider \(\PG(2, q^{2})\) modelled using the ABB construction with fixed
hyperplane \(H_{\infty}\). Let \(p_{\infty}\) be the spread line
corresponding to \(P_{\infty}\). Then any Buekenhout-Tits unital \(U\) tangent
to \(\ell_{\infty}\) at \(P_{\infty}\) with Property~\ref{prop:subline-property}
corresponds uniquely to an ovoidal cone \(\mathcal{C}\) meeting \(H_{\infty}\)
at \(p_{\infty}\).

\begin{lemma}\label{lem:numbercones}
	There are \(q^{4}{(q^{2} - 1)}^{2}\) ovoidal cones \(\mathcal{C}\) in \(\PG(4, q)\) with
	base a Tits ovoid, such that \(\mathcal{C}\) meets \(H_{\infty}\) in the spread element
	\(p_{\infty}\).
\end{lemma}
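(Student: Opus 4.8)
The plan is to count the cones by fixing their vertex, passing to the quotient geometry at the vertex, and finishing with an orbit--stabiliser computation involving the Suzuki group. First I would show that the vertex \(V\) of \(\mathcal{C}\) lies on \(p_\infty\). Since \(p_\infty \subseteq \mathcal{C}\) is a line and an ovoidal cone contains no line other than its generators --- projecting from \(V\), a line of \(\mathcal{C}\) missing \(V\) would map to \(q+1\) collinear points of the base ovoid, which is impossible --- the line \(p_\infty\) must be a generator, so \(V \in p_\infty\). As the vertex is an invariant of \(\mathcal{C}\), cones with different vertices are distinct, and it suffices to count, for each fixed \(V \in p_\infty\), the admissible cones with vertex \(V\) and sum over the \(q+1\) points of \(p_\infty\).

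Next I would work in the quotient \(\PG(4,q)/V \cong \PG(3,q)\), whose points are the lines of \(\PG(4,q)\) through \(V\). Projection from \(V\) gives a bijection between ovoidal cones with vertex \(V\) and Tits base, and Tits ovoids \(\overline{\mathcal{O}}\) of \(\PG(4,q)/V\); under it \(H_\infty\) projects to a plane \(\overline{H}_\infty\) and \(p_\infty\) to a point \(\overline{p}_\infty \in \overline{H}_\infty\). Because \(V \in H_\infty\), a generator of \(\mathcal{C}\) lies in \(H_\infty\) precisely when the corresponding point of \(\overline{\mathcal{O}}\) lies in \(\overline{H}_\infty\), and any other generator meets \(H_\infty\) only in \(V\); hence \(\mathcal{C} \cap H_\infty = p_\infty\) if and only if \(\overline{H}_\infty \cap \overline{\mathcal{O}} = \{\overline{p}_\infty\}\), i.e.\ \(\overline{H}_\infty\) is the tangent plane of \(\overline{\mathcal{O}}\) at \(\overline{p}_\infty\). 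Thus the admissible cones with vertex \(V\) correspond bijectively to the Tits ovoids of \(\PG(3,q)\) having a prescribed tangent plane at a prescribed point of it.

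Finally I would count these ovoids by orbit--stabiliser, using that \(\PGL(4,q)\) is transitive on Tits ovoids with stabiliser \(\mathrm{Sz}(q)\) of order \(q^2(q^2+1)(q-1)\), and that \(\mathrm{Sz}(q)\) acts \(2\)-transitively on the \(q^2+1\) points of the ovoid, so that the stabiliser of a tangent flag is a point stabiliser of order \(q^2(q-1)\). Combining these shows that the stabiliser \(F \le \PGL(4,q)\) of a point--plane flag acts transitively on the Tits ovoids realising that flag. Since \(\PGL(4,q)\) is flag-transitive, \(|F| = |\PGL(4,q)|/\#\{\text{point--plane flags}\} = q^6(q^2-1)(q-1)^2\), whence the number of admissible ovoids is \(|F|/(q^2(q-1)) = q^4(q-1)^2(q+1)\). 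Multiplying by the \(q+1\) choices of vertex gives \((q+1)\,q^4(q-1)^2(q+1) = q^4(q^2-1)^2\), as claimed.

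The hard part will be the group-theoretic input of the last step: correctly invoking the transitivity of \(\PGL(4,q)\) on Tits ovoids together with the order and action of the Suzuki stabiliser, and in particular justifying that the tangent-flag stabiliser \(F\) is transitive on the ovoids realising that flag --- which reduces, via the \(2\)-transitivity of \(\mathrm{Sz}(q)\) on ovoid points, to transitivity on ovoids. The geometric reductions of the first two steps are routine once the quotient viewpoint is adopted.
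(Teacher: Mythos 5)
Your proof is correct, and its skeleton matches the paper's: fix a vertex \(V\) on \(p_{\infty}\) (you justify \(V \in p_{\infty}\), which the paper takes for granted), reduce to counting Tits ovoids of a \(\PG(3,q)\) with a prescribed tangent plane at a prescribed point, and multiply by the \(q+1\) choices of vertex; your quotient geometry \(\PG(4,q)/V\) is exactly what the paper realises concretely as a hyperplane section \(H \not\ni V\). Where you genuinely diverge is in how that flagged-ovoid count is executed. The paper double counts tangency incidences: it takes the total number of Tits ovoids in \(\PG(3,q)\) (computed as \(|\PGL(4,q)|/|\mathrm{Sz}(q)|\)), multiplies by the \(q^2+1\) tangent planes per ovoid, divides by the number of planes (using transitivity of \(\PGL(4,q)\) on planes), and then divides by the \(q^2+q+1\) points of a plane (using transitivity of the plane stabiliser on its points). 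You instead run a direct orbit--stabiliser computation with the point--plane flag stabiliser \(F\), of order \(q^6(q^2-1)(q-1)^2\), showing \(F\) acts transitively on the admissible ovoids with point stabiliser of order \(q^2(q-1)\) inside \(\mathrm{Sz}(q)\). Both routes rest on the same fundamental input (transitivity of \(\PGL(4,q)\) on Tits ovoids with stabiliser \(\mathrm{Sz}(q)\)), and both give \(q^4(q-1)^2(q+1)\) ovoids per vertex. The trade-off: the paper's averaging argument needs only elementary homogeneity of \(\PGL(4,q)\) on planes and points, never touching the internal action of \(\mathrm{Sz}(q)\) on the ovoid, whereas your argument additionally invokes the (standard, but deeper) transitivity of \(\mathrm{Sz}(q)\) on the \(q^2+1\) ovoid points --- only transitivity is needed, not the \(2\)-transitivity you cite --- in exchange for a cleaner single application of orbit--stabiliser and an explicit transitivity statement that the paper only obtains implicitly.
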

\begin{proof}
  Let \(V\) be a point on the line \(p_{\infty}\), and
  \(H \neq H_{\infty}\) a hyperplane not containing \(V\). Then, \(H\) meets
  \(H_{\infty}\) in a plane containing a point \(R \in p_{\infty} \setminus \{V\}\). Any ovoidal
  cone \(\mathcal{C}\) with vertex \(V\) and base a Tits ovoid, such that
  \(\mathcal{C}\) meets \(H_\infty\) precisely in \(p_\infty\), meets \(H\) in a
  Tits ovoid tangent to \(H \cap H_{\infty}\) at the point \(R\). We will count
  all cones of this form, for all \(V \in p_{\infty}\).

  Consider the pairs of planes \(\Pi\) and Tits ovoids \(\mathcal{O}\),
  \((\Pi, \mathcal{O})\), where \(\Pi, \mathcal{O} \subset H\) and \(\Pi\) is
  tangent to \(\mathcal{O}\). On the one hand, there are
  \(|\PGL(4, q)|/|\mathcal{O}_{\PGL(4, q)}| = {(q + 1)}^2 q^4 {(q - 1)}^2 {(q^2 + q + 1)}\)
  Tits ovoids in \(\PG(3, q)\), and each has \(q^{2} + 1\) tangent planes. On the
  other hand, \(\PGL(4, q)\) is transitive on hyperplanes of \(\PG(3, q)\), so
  each plane is tangent to the same number of Tits ovoids. It thus follows, that
  there
  are \[\frac{(q + 1)^2q^4{(q-1)}^2(q^2+q+1)(q^2+1) }{q^4 + q^3+q^2+q+1}= {(q-1)}^{2}q^4(q+1)(q^2+q+1)\]
  Tits ovoids tangent to \(H \cap H_{\infty}\) conitained in \(H\).

  Furthermore, since \({\PGL(4, q)}_{H \cap H_{\infty}}\) is transitive on
  points of \(H \cap H_{\infty}\), each point of \(H \cap H_{\infty}\) is
  contained in	the same number of Tits ovoids \(\mathcal{O}\), so it follows
  that the number of Tits ovoids tangent to \(H \cap H_{\infty}\) at
  \(R = p_{\infty} \cap H\) is \({(q - 1)}^2q^4(q + 1)\). Hence, there is an equal
  number of ovoidal cones with base a Tits ovoid, vertex \(V\), and meeting
  \(H_{\infty}\) at \(p_{\infty}\). As the choice of \(V\) was arbitrary, and
  there are \(q+1\) points on \(p_\infty\), there are \({(q^2 - 1)}^2q^4\) ovoidal
  cones with base a Tits ovoid, and meeting \(H_{\infty}\) at \(p_{\infty}\).
\end{proof}
\begin{theorem}\label{cor:proj-equivalence}
	All Buekenhout-Tits unitals in \(\PG(2, q^{2})\) are \(\PGL\)-equivalent.
\end{theorem}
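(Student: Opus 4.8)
The plan is to combine the two preceding counting lemmas with the flag-transitivity of $\PGL(3,q^2)$ in a standard orbit-counting argument. Concretely, I would show that the orbit of $\U$ under the group $H$ already exhausts \emph{all} Buekenhout-Tits unitals that are tangent to $\ell_\infty$ at $P_\infty$ and for which $P_\infty$ has Property~\ref{prop:subline-property}, and then reduce an arbitrary Buekenhout-Tits unital to this normalised position.

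First I would argue that the $H$-orbit of $\U$ is contained in the set $\mathcal{S}$ of all Buekenhout-Tits unitals tangent to $\ell_\infty$ at $P_\infty$ with $P_\infty$ satisfying Property~\ref{prop:subline-property}. Indeed, every projectivity maps a Buekenhout-Tits unital to a Buekenhout-Tits unital, and each element of $H$ fixes $\ell_\infty$ linewise and $P_\infty$ pointwise, so both the tangency at $P_\infty$ and Property~\ref{prop:subline-property} at $P_\infty$ are preserved. By Lemma~\ref{thm:unitals-equivalent-in-plane}, this orbit has size $q^4{(q^2-1)}^2$.

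Next I would count $\mathcal{S}$ directly via the ABB correspondence recalled just before Lemma~\ref{lem:numbercones}: each member of $\mathcal{S}$ corresponds uniquely to an ovoidal cone with base a Tits ovoid meeting $H_\infty$ in the spread line $p_\infty$, and conversely. Hence $|\mathcal{S}|$ equals the number of such cones, which is $q^4{(q^2-1)}^2$ by Lemma~\ref{lem:numbercones}. Since the $H$-orbit of $\U$ is a subset of $\mathcal{S}$ of the same finite cardinality, the two sets coincide, so every unital in $\mathcal{S}$ is projectively equivalent to $\U$. To remove the normalisation, let $U'$ be an arbitrary Buekenhout-Tits unital. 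The ABB construction endows $U'$ with a point having Property~\ref{prop:subline-property}, and since $U'$ is non-classical, the result of~\cite{Barwick2001} guarantees this point $P'$ is \emph{unique}; it lies on a unique tangent line $\ell'$. As $\PGL(3,q^2)$ acts transitively on incident point-line pairs of $\PG(2,q^2)$, some projectivity sends $(P',\ell')$ to $(P_\infty,\ell_\infty)$, carrying $U'$ into $\mathcal{S}$ and hence, by the previous step, to a unital equivalent to $\U$. Composing projectivities shows $U'$ is equivalent to $\U$.

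The main obstacle I anticipate is the containment and bijection underpinning the count of $\mathcal{S}$: one must ensure that the ABB correspondence between ovoidal cones through $p_\infty$ and members of $\mathcal{S}$ is genuinely one-to-one, so that Lemma~\ref{lem:numbercones} counts unitals rather than cones with redundancy, and that $H$-images of $\U$ cannot slip out of $\mathcal{S}$. Both rest on the canonical role of $P_\infty$ as the point arising from the spread line $p_\infty=\mathcal{C}\cap H_\infty$, which is precisely what Property~\ref{prop:subline-property} isolates; once this is pinned down, the equality of the two cardinalities does all the remaining work.
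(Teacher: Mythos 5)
Your proposal is correct and takes essentially the same approach as the paper: both arguments equate the size of the $H$-orbit of $\U$ (Lemma~\ref{thm:unitals-equivalent-in-plane}) with the count of all Buekenhout-Tits unitals in the normalised position, obtained via the ABB correspondence and Lemma~\ref{lem:numbercones}, to conclude that every normalised unital lies in that orbit. The only difference is presentational: you make explicit the containment argument and the final flag-transitivity reduction of an arbitrary Buekenhout-Tits unital to the normalised position, steps the paper compresses into ``The result follows.''
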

\begin{proof}
	From Lemmas \ref{thm:unitals-equivalent-in-plane} and \ref{lem:numbercones}, we see that
	the number of ovoidal cones with vertex a Tits ovoid, tangent to \(H_{\infty}\) at \(p_{\infty}\) is
	equal to the number of Buekenhout-Tits unitals that are \(\PGL\) equivalent to
	\(\U\) and tangent to \(l_{\infty}\) at \(P_{\infty}\) with
	Property~\ref{prop:subline-property}. The result follows.
\end{proof}
\begin{corollary}\label{cor:bt-stab-equiv}
	Let \(U\) be a Buekenhout-Tits unital, then the projectivity group stabilising
	\(U\) is isomorphic to the group \(G\) in Theorem~\ref{result:ebert-stab}.
\end{corollary}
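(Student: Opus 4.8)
The plan is to derive this as an immediate consequence of the projective equivalence established in Theorem~\ref{cor:proj-equivalence}. By that theorem, any Buekenhout-Tits unital \(U\) is \(\PGL\)-equivalent to the standard unital \(\U\) of equation~\eqref{eq:bt-definition}; that is, there exists a projectivity \(\phi \in \PGL(3, q^2)\) with \(\phi(U) = \U\). The whole argument rests on the general principle that projectively equivalent objects have conjugate, and hence isomorphic, stabilisers.

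First I would observe that conjugation by \(\phi\) carries the stabiliser of \(U\) onto the stabiliser of \(\U\). Concretely, for \(g \in \PGL(3, q^2)\), using \(\phi^{-1}(\U) = U\) one checks that
\[
  \phi g \phi^{-1}(\U) = \phi g (U),
\]
so that \(\phi g \phi^{-1}\) fixes \(\U\) setwise precisely when \(g\) fixes \(U\) setwise. Hence the inner automorphism \(g \mapsto \phi g \phi^{-1}\) of \(\PGL(3, q^2)\) restricts to a bijection \(\PGL(3, q^2)_{U} \to \PGL(3, q^2)_{\U}\).

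Since conjugation is always a group homomorphism, this restricted bijection is a group isomorphism. Therefore \(\PGL(3, q^2)_{U}\) is conjugate, and in particular isomorphic, to \(G = \PGL(3, q^2)_{\U}\), the abelian group of order \(q^2\) described explicitly in Result~\ref{result:ebert-stab}. This is exactly the assertion of the corollary.

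I do not expect any genuine obstacle here: all of the substantive content, namely the projective equivalence of every Buekenhout-Tits unital with \(\U\), has already been carried out in Theorem~\ref{cor:proj-equivalence}, and the present statement is the routine observation that equivalence forces conjugacy of stabilisers. The only point requiring minor care is bookkeeping the direction of the action (the paper lets matrices act on the right), but this does not affect the conclusion, since an inner automorphism restricts to an isomorphism regardless of the side on which the projectivities act.
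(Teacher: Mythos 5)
Your proof is correct and matches the paper's reasoning: the paper states this as an immediate corollary of Theorem~\ref{cor:proj-equivalence} without explicit proof, the implicit argument being precisely yours, namely that a projectivity carrying \(U\) to \(\U\) conjugates \(\PGL(3,q^2)_U\) onto \(\PGL(3,q^2)_{\U} = G\), so the two stabilisers are isomorphic.
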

In showing that all Buekenhout-Tits unitals are projectively equivalent, we may
use \(\U\) to verify statements about general Buekenhout-Tits unitals.

\section{On the Stabiliser of the Buekenhout-Tits Unital}\label{sec:stab-buek-tits}
We now describe the stabiliser of the Buekenhout-Tits unital \(\U\) in \(\PTL(3, q^{2})\).
\begin{lemma}\label{lem:G-matrix-mult}
	Let \(M_{u, v}, M_{s, t}\) be matrices inducing collineations of \(G\) as defined in
	Result~\ref{result:ebert-stab}, then
	\(M_{u, v} M_{s, t} = M_{u + s, t + v + su \delta}\).
\end{lemma}
\begin{proof}
	Using equation~\eqref{eq:Muv}, we find
	\begin{align}
		M_{u, v} M_{s, t} =
		\begin{bmatrix}
			1 & (s + u)\epsilon & (t + v + su \delta) + {(s + u)}^{\sigma} \\
			0 & 1               & (u + s) + (u + s)\epsilon                \\
			0 & 0               & 1
		\end{bmatrix}.
	\end{align}
	Thus, we have \(M_{u,v} M_{s, t} = M_{u + s, t + v + su \delta}\).
\end{proof}
\begin{corollary}\label{cor:G-matrix-order}
  The order of any collineation of \(G\) induced by a matrix \(M_{u, v}\) as
  defined in Result~\ref{result:ebert-stab} is four if and only if \(u \neq 0\),
  and two if and only if \(u = 0\) and \(v \neq 0\).
\end{corollary}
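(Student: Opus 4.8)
The plan is to compute powers of $M_{u,v}$ directly from the multiplication rule in Lemma~\ref{lem:G-matrix-mult}, exploiting that $\mathbb{F}_q$ has characteristic two. First I would square $M_{u,v}$: applying the rule $M_{u,v}M_{s,t} = M_{u+s,\, t+v+su\delta}$ with $(s,t)=(u,v)$ gives $M_{u,v}^2 = M_{2u,\, 2v+u^2\delta}$. Since $q = 2^{2e+1}$ we have $2u = 2v = 0$, so this collapses to $M_{u,v}^2 = M_{0,\, u^2\delta}$.

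The key observation is then that any matrix of the shape $M_{0,w}$ squares to the identity: a second application of the rule gives $M_{0,w}^2 = M_{0,\,2w} = M_{0,0}$, and from the explicit form in~\eqref{eq:Muv} the matrix $M_{0,0}$ is the identity. Moreover, $M_{0,w}$ induces the trivial projectivity if and only if $w = 0$: its $(1,3)$-entry is $w$ while its $(1,1)$-entry is $1$, so $M_{0,w}$ is a scalar multiple of the identity only when $w=0$. Equivalently, since the parametrisation $(u,v)\mapsto M_{u,v}$ is a bijection onto the group $G$ of order $q^2$, the collineation induced by $M_{u,v}$ is trivial exactly when $(u,v)=(0,0)$.

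From here the case analysis is immediate. If $u \neq 0$, then $u^2\delta \neq 0$, because $\delta \neq 0$ (its absolute trace equals one); hence $M_{u,v}^2 = M_{0,\,u^2\delta}$ is a non-trivial involution, so $M_{u,v}^4 = I$ while $M_{u,v}^2 \neq I$, giving order exactly four. If $u = 0$, then $M_{0,v}^2 = I$, so the order divides two, and it equals two precisely when $v \neq 0$ and equals one when $v = 0$. This yields both equivalences in the statement.

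Since every step is a one-line substitution into Lemma~\ref{lem:G-matrix-mult}, I expect no serious obstacle; the only point requiring care is to argue in $\PGL$ rather than at the level of matrices, that is, to confirm that $M_{0,w}$ represents the trivial projectivity only for $w=0$, which the explicit form in~\eqref{eq:Muv} settles at once.
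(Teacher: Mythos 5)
Your proof is correct and takes essentially the same approach as the paper: both compute $M_{u,v}^2 = M_{0,\,u^2\delta}$ and $M_{u,v}^4 = M_{0,0} = I$ via the multiplication rule of Lemma~\ref{lem:G-matrix-mult} (equivalently, direct calculation) and read off the orders from the case analysis on $u$ and $v$. Your explicit check that $M_{0,w}$ induces a non-trivial projectivity exactly when $w \neq 0$ is left implicit in the paper's proof, but it is the same argument.
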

\begin{proof} Firstly note that \(M_{0, 0} = I\). Direct calculation shows that \(M_{u,v}^2=M_{0,u^2\delta}\), \(M_{u,v}^3=M_{u,v+u^2\delta}\) and \(M_{u,v}^4=M_{0,0}\).
\end{proof}
\begin{corollary}\label{cor:bt-homography-group-simple}
	The stabiliser group \(G\) as defined in Result~\ref{result:ebert-stab} is isomorphic to \({(C_{4})}^{2e + 1}\).
\end{corollary}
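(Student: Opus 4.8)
The plan is to invoke the structure theorem for finite abelian groups and to pin down its two relevant invariants for \(G\) — the exponent and the rank — using only the element-order data already recorded. First I would note that \(G\) is abelian: this is part of Result~\ref{result:ebert-stab}, but it also falls straight out of Lemma~\ref{lem:G-matrix-mult}, since the product \(M_{u,v}M_{s,t}=M_{u+s,\,t+v+su\delta}\) is symmetric in the pairs \((u,v)\) and \((s,t)\) because addition and multiplication in \(\mathbb{F}_q\) are commutative (and we are in characteristic \(2\)). Moreover \(|G|=q^2=2^{4e+2}\), so \(G\) is a finite abelian \(2\)-group and hence
\[
G\cong C_{2^{a_1}}\times\cdots\times C_{2^{a_k}}
\]
for some integers \(a_i\ge 1\) with \(\sum_{i} a_i = 4e+2\).

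Next I would bound the exponent. By Corollary~\ref{cor:G-matrix-order} every non-identity element of \(G\) has order \(2\) or \(4\), and the identity \(M_{u,v}^4=M_{0,0}=I\) from its proof shows the exponent of \(G\) equals \(4\); consequently each cyclic factor satisfies \(a_i\le 2\). It then remains to compute the rank \(k\), which I would read off from the \(2\)-torsion subgroup \(G[2]=\{g\in G:g^2=I\}\). Using \(M_{u,v}^2=M_{0,\,u^2\delta}\), the condition \(g^2=I\) forces \(u^2\delta=0\), and since \(\delta\neq 0\) this is equivalent to \(u=0\); thus \(G[2]=\{M_{0,v}\mid v\in\mathbb{F}_q\}\) has order \(q=2^{2e+1}\). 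On the other hand, for \(G\cong C_{2^{a_1}}\times\cdots\times C_{2^{a_k}}\) the subgroup \(G[2]\) is elementary abelian of order \(2^{k}\), whence \(k=2e+1\).

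Finally I would combine the two constraints: there are \(k=2e+1\) cyclic factors, each with \(a_i\le 2\), and \(\sum_i a_i = 4e+2 = 2(2e+1)\). This forces \(a_i=2\) for every \(i\), giving \(G\cong (C_4)^{2e+1}\). I do not expect a genuine obstacle here, since everything reduces to the structure theorem together with two short computations; the only point demanding care is to extract the rank correctly from the involution count, i.e.\ to recognise that the subgroup of elements of order dividing \(2\) is elementary abelian of order \(2^{k}\) and to evaluate it explicitly as \(\{M_{0,v}\}\), of size \(q=2^{2e+1}\).
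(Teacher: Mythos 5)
Your proof is correct and follows essentially the same route as the paper: both invoke the structure theorem for finite abelian \(2\)-groups and pin down the decomposition using the element-order data of Corollary~\ref{cor:G-matrix-order}. The only difference is that you read off the rank from the \(2\)-torsion subgroup \(G[2]=\{M_{0,v}\mid v\in\mathbb{F}_q\}\) of order \(q\), whereas the paper equivalently counts the \(q^{2}-q\) elements of order four and solves \((4^{k}-2^{k})2^{l}=4^{2e+1}-2^{2e+1}\) subject to \(2k+l=4e+2\); these two counts are complementary and force the same conclusion.
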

\begin{proof} Recall from Result~\ref{result:ebert-stab} that $|G|=q^2=2^{4e+2}$.
	From Corollary~\ref{cor:G-matrix-order}, we have that
	\(G \equiv {(C_{4})}^{k}{(C_{2})}^{l}\) for some integers \(k, l\) such that
	\(2^{2k + l} = |G| = 2^{4e + 2}\), and hence,
	\begin{align}l = 2(e + 1 - k). \label{constraints}
	\end{align} Furthermore, we see that the number of elements of order four in $G$ is
	\(q^{2} - q\) as they correspond to all matrices $M_{u,v}$ with $u,v\in \F_q$ and $u\neq 0$. The number of elements of order four in a group isomorphic to
	\({(C_{4})}^{k}{(C_{2})}^{l}\) is \((4^{k} - 2^{k})2^{l}\).  Thus,
	\begin{equation}
		(4^{k} - 2^{k})2^{l}         = 4^{2e + 1} - 2^{2e + 1}.
	\end{equation}
	Using \eqref{constraints}, we find that \(k = 2e + 1\), and therefore \(G \equiv {(C_{4})}^{2e + 1}\).
\end{proof}

\begin{theorem}\label{thm:bt-unital-pgammal-stab}
	Let \(U\) be a Buekenhout-Tits unital in $\PG(2,q^2)$, $q=2^{2e+1}$, then the stabiliser group of \(U\) in
	\(\PTL(3, q^{2})\) is the order \(q^{2}(4e + 2)\) group \(GK\), where
	\(K\) is a cyclic subgroup of order \(16e + 8\) generated by
	\begin{equation}
		\psi  :\mathbf{x}\mapsto \mathbf{x}^{2} \begin{bmatrix}
			1 & 1                               & \epsilon                        \\
			0 & \delta^{\sigma/2}(1 + \epsilon) & \delta^{\sigma/2}(1 + \epsilon) \\
			0 & 0                               & \delta^{\sigma+1}
		\end{bmatrix}.
	\end{equation}

	(Here, $\mathbf{x}$ denotes the row vector containing the three homogeneous coordinates of a point, and $\mathbf{x}^2$ denotes its elementwise power.)
\end{theorem}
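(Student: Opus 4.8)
The plan is to realise the full collineation stabiliser as an extension of the linear stabiliser \(G\) by a subgroup of \(\mathrm{Gal}(\mathbb{F}_{q^2}/\mathbb{F}_2)\cong C_{4e+2}\), and then to pin down both that subgroup and the precise group structure by exhibiting and analysing the single semilinear collineation \(\psi\). Recall that every collineation of \(\PG(2,q^2)\) has the form \(\mathbf{x}\mapsto\mathbf{x}^{\theta}M\) for a field automorphism \(\theta\) and an invertible matrix \(M\); sending such a collineation to its companion automorphism \(\theta\) defines a homomorphism \(\rho\colon\PTL(3,q^2)\to\mathrm{Aut}(\mathbb{F}_{q^2})\) whose kernel is \(\PGL(3,q^2)\). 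Restricting \(\rho\) to \(S:=\PTL(3,q^2)_U\) gives a homomorphism with kernel exactly \(G=\PGL(3,q^2)_U\), so that \(S/G\cong\rho(S)\le\mathrm{Aut}(\mathbb{F}_{q^2})\cong C_{4e+2}\). In particular \(|S|=|G|\cdot|\rho(S)|\le q^{2}(4e+2)\), which furnishes an a priori upper bound; the task is to show it is attained and to identify a generator realising the Frobenius.

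By Theorem~\ref{cor:proj-equivalence} it suffices to work with the explicit unital \(\U\) of~\eqref{eq:bt-definition}, so the first substantive step is to verify directly that \(\psi\) stabilises \(\U\). I would check first that \(\psi\) fixes \(P_{\infty}=(0,0,1)\) (immediate, since squaring fixes \((0,0,1)\) and the third row of the matrix is a scalar multiple of \((0,0,1)\)), and then that each \(P_{r,s,t}\) is sent to a point \(P_{r',s',t'}\in\U\), computing \((r',s',t')\) as functions of \((r,s,t)\). This is where the arithmetic of \(\epsilon\) and \(\sigma\) enters: one squares the homogeneous coordinates (the companion automorphism), multiplies by the given matrix, renormalises the leading coordinate to \(1\), and then uses \(\epsilon^{q}=\epsilon+1\), \(\epsilon^{2}=\epsilon+\delta\), the fact that \(\sigma^{2}\) induces \(x\mapsto x^{2}\), and the prescribed values \(\delta^{\sigma/2}\) and \(\delta^{\sigma+1}\) to recognise the image as lying in the defining set. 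The crux, and the step I expect to be most delicate, is confirming that the transformed third coordinate again collapses to the required shape \(r'+(s'^{\sigma+2}+t'^{\sigma}+s't')\epsilon\); this demands that the squaring together with the matrix action conspire, via the Tits relations above, to reproduce the defining quadratic form. Once this is established, \(\psi\in S\), and reading off its companion automorphism \(x\mapsto x^{2}\) shows \(\rho(\psi)\) is the Frobenius, a generator of \(C_{4e+2}\); hence \(\rho(S)=C_{4e+2}\), the bound of the first paragraph is attained, and \(|S|=q^{2}(4e+2)\).

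It remains to compute the order of \(\psi\) and assemble the final structure. Since \(\rho(\psi)\) has order \(4e+2\), the element \(\psi^{4e+2}\) has trivial companion automorphism and therefore lies in \(G\); by Corollary~\ref{cor:G-matrix-order} its order is \(1\), \(2\), or \(4\). Iterating \(\psi\) yields \(\psi^{n}\colon\mathbf{x}\mapsto\mathbf{x}^{2^{n}}A^{(2^{n-1})}A^{(2^{n-2})}\cdots A^{(2)}A\), where \(A\) is the matrix defining \(\psi\) and \(A^{(2^{i})}\) denotes \(A\) with each entry raised to the power \(2^{i}\); so I would evaluate this triangular matrix product at \(n=4e+2\) and identify \(\psi^{4e+2}\) as a matrix \(M_{u,v}\) with \(u\neq0\), that is, an element of order \(4\). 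It then follows that \(\langle\psi\rangle\cap G=\langle\psi^{4e+2}\rangle\) has order \(4\), whence \(\langle\psi\rangle\) is cyclic of order \(4(4e+2)=16e+8\); set \(K=\langle\psi\rangle\). Finally \(GK\subseteq S\) with \(|GK|=|G|\,|K|/|G\cap K|=q^{2}(16e+8)/4=q^{2}(4e+2)=|S|\), so \(S=GK\), completing the proof. The secondary obstacle here is the bookkeeping in the matrix product, since the Frobenius twist \(A^{(2^{i})}\) alters the entries at every factor; exploiting that \(A\) is upper triangular, so that the diagonal entries accumulate independently and the off-diagonal contributions telescope, should keep this tractable.
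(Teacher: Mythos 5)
Your proposal is correct, but it pins down the order of \(S=\PTL(3,q^2)_{\U}\) by a genuinely different mechanism than the paper. The paper never needs an a priori upper bound: it counts the Buekenhout-Tits unitals (\(q^4(q^2-1)^2\), Lemma~\ref{lem:numbercones}), notes that they form a single \(\PGL\)-orbit (Theorem~\ref{cor:proj-equivalence}) and hence --- since \(\PGL(3,q^2)\triangleleft\PTL(3,q^2)\) and collineations preserve the class of Buekenhout-Tits unitals --- a single \(\PTL\)-orbit of the same size, so orbit--stabiliser gives \(|S|=q^2(4e+2)\) exactly, before \(\psi\) is ever introduced. You instead use the companion-automorphism homomorphism \(\rho\) with kernel \(G\) to get \(|S|\le q^2(4e+2)\), and you need \(\psi\) (with Frobenius companion) to force equality. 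Your route is more self-contained: it uses only \(|G|=q^2\) from Result~\ref{result:ebert-stab} and avoids both the count of Lemma~\ref{lem:numbercones} and the closure of the Buekenhout-Tits class under semilinear collineations; the cost is that the verification that \(\psi\) stabilises \(\U\) (which you, like the paper, assert rather than execute) now carries the full weight of the order computation instead of merely exhibiting the group structure. For the order of \(\psi^{4e+2}\), your matrix-product bookkeeping does come out as you predicted, and in fact collapses to the same trace computation as the paper's: the \((1,2)\) entry of \(A^{(2^{4e+1})}\cdots A^{(2)}A\) telescopes to \(\sum_{m=0}^{4e+1}\nu^{2^m-1}=\trace_{\F_{q^2}/\F_2}(\nu)/\nu\) with \(\nu=\delta^{\sigma/2}(1+\epsilon)\), and \(\trace_{\F_{q^2}/\F_2}(\nu)=\trace_{\F_{q^2}/\F_2}(\delta^{\sigma/2})+\trace_{\F_{q^2}/\F_2}(\delta^{\sigma/2}\epsilon)=0+1\neq 0\), whence \(\psi^{4e+2}=M_{u,v}\) with \(u\neq 0\) and \(|\psi|=16e+8\). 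The paper organises this same computation more economically by tracking only the image of the point \((0,1,z)\) under iterates of \(\psi\) rather than the whole matrix power; the final assembly (\(|\langle\psi\rangle\cap G|=4\), \(|GK|=q^2(4e+2)\), hence \(S=GK\)) is identical in both arguments.
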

\begin{proof}
  From Lemma \ref{lem:numbercones}, we have that the number of Buekenhout-Tits
  unitals is \(q^4{(q^2 - 1)}^{2}\). Since all of those unitals are
  \(\PGL\)-equivalent by Theorem \ref{cor:proj-equivalence}, and
  \(\PGL(3, q^2) \triangleleft \PTL(3, q^2)\), we have that
	\begin{equation}
		q^4{(q^2 - 1)}^{2} = \frac{|\PGL(3, q^2)|}{|\PGL{(3, q^2)}_U|} = \frac{|\PTL(3, q^2)|}{|\PTL{(3, q^2)}_{U}|}.
	\end{equation}

	So \(\PTL(3, q^2)_U\) must have order \(q^{2}(4e + 2)\). Direct calculation
	shows that \(\psi\) stabilises $\U$. We have \(\psi^{4e + 2} \in G\) as
	\(\mathbf{x}^{2^{4e + 2}} = \mathbf{x}^{q^{2}} = \mathbf{x}\). Hence, \(|\psi| = (4e + 2)|\psi^{4e + 2}|\).
	From Corollary~\ref{cor:G-matrix-order}, it follows that
	\(|\psi^{4e + 2}| \in \{ 1, 2, 4\}\), with \(|\psi^{4e + 2}| = 4\) if and only
	if \(\psi^{4e + 2}\) is induced by \(M_{u, v}\) for some \(u \neq 0\). Hence,
	\(|\psi^{4e + 2}| = 4\) if and only if \(\psi^{4e + 2}(0, 1, 0) \neq (0, 1, 0)\)
	as \((0, 1, 0)M_{u, v} = (0,1,u + u \epsilon)\). Consider the point \((0, 1, z)\)
	for some arbitrary \(z \in \mathbb{F}_q\). Direct calculation shows that
	\(\psi(0, 1, z) = (0, 1, 1 + \mu z^2)\), where
	\(\mu = \frac{\delta^{\sigma + 1}}{\delta^{\sigma/2}(1 + \epsilon)} = \delta^{\sigma/2}\epsilon\).
	Thus,
	\begin{equation}
		\psi^{k}(0, 1, z) = (0, 1, \sum_{i=0}^{k}\mu^{2^{i} - 1} + zg(z))
	\end{equation}
	for some polynomial \(g(z)\) depending on \(k\). If \(z =
	0\) and \(k = 4e + 2\) we thus find:
	\begin{align}
		\psi^{4e + 2}(0, 1, 0) & = (0, 1, \sum_{i=0}^{4e + 2}\mu^{2^{i} - 1}) \\
		                       & = (0, 1, \frac{\trace(\mu)}{\mu}).
	\end{align}
	 Recall that $\epsilon^q=\epsilon+1$, so $\trace_{\mathbb{F}_{q^2}/\mathbb{F}_{q}}(\epsilon)=1$.
	We have that, $\trace_{\mathbb{F}_{q^2}/\mathbb{F}_{2}}(\delta^{\sigma/2}\epsilon)=\trace_{\mathbb{F}_{q}/\mathbb{F}_{2}}(\trace_{\mathbb{F}_{q^2}/\mathbb{F}_{q}}(\delta^{\sigma/2}\epsilon))=\trace_{\mathbb{F}_{q}/\mathbb{F}_{2}}(\delta^{\sigma/2}\trace_{\mathbb{F}_{q^2}/\mathbb{F}_{q}}(\epsilon))=\trace_{\mathbb{F}_{q}/\mathbb{F}_{2}}(\delta^{\sigma/2}) = 1.$
	Hence, \(\psi((0, 1, 0)) \neq (0, 1, 0)\), so \(|\psi^{4e + 2}| = 4\) and
	\(|\psi| = 16e + 8\). Let \(K = \langle\psi\rangle\), because
	\(|K \cap G| = 4\), it follows that \(|GK| = q^{2}(4e + 2)\) and thus
	\(GK = \PTL{(3, q^{2})}_U\).
\end{proof}

\section{On the Feet of the Buekenhout-Tits Unital}
The feet of the Buekenhout-Tits unital $\U$ are first described by Ebert
in~\cite{ebert97_bueken_tits_unital}. He shows that the feet of a point
\(P = (1, y_1 + y_2\epsilon, z_1 + z_2\epsilon)\) is the following set of
points:
\begin{multline} \label{feetex}
	\tau_P(\U) = \{(1, s + t\epsilon, s^2 + t^2\delta + st + y_1s + y_1t + y_2\delta{t} + z_1 + (s^{\sigma + 2} + t^{\sigma} + st)\epsilon)\\|\,s, t \in \mathbb{F}_q,\, s^{\sigma + 2} + t^{\sigma} + st = y_2s + y_1t + z_2\}.
\end{multline}
If the line \(\ell\) has equation \(\alpha x + y = 0\), where \(\alpha \in \mathbb{F}_{q^2}\), Ebert
shows that \(|\ell \cap \tau_P(\U)| \leq 1\). Otherwise, \(\ell\) has equation
\((a_1 + a_2\epsilon)x + (b_1 + b_2\epsilon)y + z = 0\), with $a_1,a_2,b_1,b_2\in \F_q$, and Ebert shows that \(\ell\)
meets \(\tau_P(\U)\) in the points \(P_{r, s, t} \in \U\), where \(r, s, t \in \F_q\) satisfy
\begin{align}
	s^2 + \delta t^2 + st + (y_1 + b_1) s + (y_1 + y_2 \delta + b_2 \delta) t + z_1 + a_1 & = 0,                            \label{eq:sys-1}       \\
	s^{\sigma + 2} + t^{\sigma} + st                                                      & = b_2 s + (b_1 + b_2) t + a_2,\label{eq:sys-2}         \\
	y_{2} s + y_{1} t + z_{2}                                                             & = b_{2} s + (b_{1} + b_{2})t + a_{2}. \label{eq:sys-3}
\end{align}
We will show that for all choices of points \(P \notin \ell_{\infty}\) and lines \(\ell\), \(|\tau_{P}(\U) \cap \ell| \leq 4\).

\begin{lemma}\label{lem:orb-reps}
	Let \(G\) be the group of projectivities stabilising the Buekenhout-Tits
	unital as described in Result~\ref{result:ebert-stab}. Then, the set of \(q^2 - q\)
	points
	\(\{P_{a, b} = (1, a, b \epsilon) \,|\, a,b \in \mathbb{F}_q,\, b \neq a^{\sigma + 2}\}\)
	are points from \(q^2 - q\) distinct point orbits of order \(q^2\) under \(G\).
\end{lemma}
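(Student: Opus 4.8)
The plan is to read off both the orbit size and the separation of orbits from a single explicit computation of the $G$-action on the representatives $P_{a,b} = (1, a, b\epsilon)$. Using the matrices $M_{u,v}$ from Result~\ref{result:ebert-stab} acting by right multiplication, I would first establish the formula
\begin{equation*}
	(1, a, b\epsilon)\, M_{u,v} = \bigl(1,\; a + u\epsilon,\; (v + au) + (u^\sigma + au + b)\epsilon\bigr).
\end{equation*}
This follows by a direct multiplication; crucially, because the middle coordinate $a$ lies in $\mathbb{F}_q$ and thus carries no $\epsilon$, and because $b\epsilon$ only meets the lower-right entry $1$, no reduction via $\epsilon^2 = \epsilon + \delta$ is triggered and every intermediate expression stays linear in $\epsilon$ over $\mathbb{F}_q$.

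To obtain the orbit length, I would compute the stabiliser of $P_{a,b}$ in $G$. Since $P_{a,b}$ and its image above both have first coordinate $1$, projective equality is genuine equality. Comparing middle coordinates gives $a + u\epsilon = a$, and as $a \in \mathbb{F}_q$ while $\epsilon \notin \mathbb{F}_q$, this forces $u = 0$; substituting $u = 0$ into the last coordinate then yields $v = 0$. Hence the stabiliser is trivial, and since $|G| = q^2$ by Result~\ref{result:ebert-stab}, the orbit-stabiliser theorem gives orbit length $q^2$ for every such point, regardless of the constraint $b \neq a^{\sigma + 2}$.

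For distinctness of orbits, I would set the image of $P_{a,b}$ equal to a second representative $P_{a',b'} = (1, a', b'\epsilon)$. Comparing the $\mathbb{F}_q$-part and the $\epsilon$-part of the middle coordinate again forces $u = 0$ and $a = a'$, and the last coordinate then forces $v = 0$ and $b = b'$. Thus two representatives lie in a common $G$-orbit only when they coincide, so the listed points determine pairwise distinct orbits. It remains to count the index set: for each $a \in \mathbb{F}_q$ precisely one value $b = a^{\sigma + 2}$ is excluded, and this is exactly the value placing $P_{a,b}$ on $\U$ (as one checks from the defining equations~\eqref{eq:bt-definition} with $s = a$, $t = 0$), leaving $q^2 - q$ admissible pairs $(a,b)$ as claimed.

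There is no substantial obstacle here: the entire argument reduces to the displayed formula together with repeated appeals to the $\mathbb{F}_q$-linear independence of $1$ and $\epsilon$ to split coordinates into their $\mathbb{F}_q$- and $\epsilon$-components. The only point warranting care is ensuring that the special shape $(1, a, b\epsilon)$ really keeps the third coordinate linear in $\epsilon$, which is what makes the stabiliser and separation computations immediate rather than requiring the full $\epsilon^2$-reduction.
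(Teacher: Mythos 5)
Your proof is correct and follows essentially the same route as the paper: a direct computation of $(1,a,b\epsilon)M_{u,v}$, followed by comparing $\mathbb{F}_q$- and $\epsilon$-components to force $u=v=0$, which simultaneously gives trivial stabilisers (hence orbits of length $q^2$) and pairwise distinct orbits. Your explicit invocation of the orbit-stabiliser theorem and the verification that $b\neq a^{\sigma+2}$ excludes exactly the points of $\U$ are slightly more detailed than the paper's presentation, but the underlying argument is identical.
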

\begin{proof}
	Suppose there exists a collineation of \(G\) induced by a matrix \(M_{u, v}\) such that \(P_{a, b} M_{u, v} = P_{c, d}\). Then,
	\begin{equation*}
		\left( 1, a, b \epsilon \right) \begin{bmatrix}
			1 & u \epsilon & v + u^{\sigma} \epsilon \\
			0 & 1          & u + u \epsilon          \\
			0 & 0          & 1
		\end{bmatrix} = \left( 1, c, d \epsilon \right).
	\end{equation*}
	However, it is clear that
	\(P_{a, b}M_{u, v} = \left( 1, a + u \epsilon, v + u^{\sigma}\epsilon + a \left( u + u \epsilon \right) + b\epsilon \right)\),
	so \(a + u \epsilon = c\). Therefore, \(a = c\) and \(u = 0\). If \(u = 0\),
	then \(v + b\epsilon = d \epsilon\), and we have \(b = d\). Hence,
	\(P_{a, b} = P_{c, d}\) and the lemma follows.
\end{proof}
There are \(q^4 - q^3 = q^{2} (q^2 - q)\) points of \(\PG(2, q)\) not on
\(\ell_{\infty}\) or \(\U\). By Lemma~\ref{lem:orb-reps}, each of these points
lies in the orbit of a point of the form \((1, a, b \epsilon)\). Therefore, in order to study the feet of a point $P$, we
may assume that the point \(P = (1, y_1 + y_2\epsilon, z_1 + z_2\epsilon)\) has
\(y_2 = z_1 = 0\).

The following lemma shows that the feet of a point $P = (1, y_1, z_2\epsilon)$ meets almost all lines in at most $2$ points.
\begin{lemma}\label{lem:easy-case}  Let $\ell:\alpha x+\beta y + z$ be a line in $\PG(2,q^2)$,
	where \(\alpha= a_{1} + a_{2} \epsilon\),
	\(\beta = b_{1} + b_{2} \epsilon\) and
	\(a_{1}, a_{2}, b_{1}, b_{2} \in \mathbb{F}_{q}\). Let
	\(P = (1, y_{1}, z_{2} \epsilon)\), with \(y_{1}, z_{2} \in \mathbb{F}_{q}\)
	such that \(z_{2} \neq y_{1}\). Unless \(b_{2} = 0\), \(y_{1} = b_{1}\) and
	\(a_{2} = z_{2}\), we have \(|\tau_P(\U) \cap \ell| \leq 2\).
\end{lemma}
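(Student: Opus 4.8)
The plan is to substitute the reduced coordinates $y_2 = z_1 = 0$ into the system~\eqref{eq:sys-1}--\eqref{eq:sys-3} and bound the number of pairs $(s,t) \in \F_q^2$ satisfying it. A point $P_{r,s,t}$ lying on both $\ell$ and $\tau_P(\U)$ is completely determined by $(s,t)$, since the coordinate $r$ is forced by the line equation and the map $(s,t) \mapsto s + t\epsilon$ is injective; hence distinct feet on $\ell$ correspond to distinct solutions $(s,t)$. It therefore suffices to bound the number of solutions of a \emph{subsystem}, and I would discard the nonlinear equation~\eqref{eq:sys-2} altogether, working only with the quadratic~\eqref{eq:sys-1} and the linear equation~\eqref{eq:sys-3}. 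With $y_2 = z_1 = 0$, equation~\eqref{eq:sys-3} reads
\begin{equation*}
	b_2 s + (b_1 + b_2 + y_1) t + (a_2 + z_2) = 0 .
\end{equation*}

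The first observation is that the excluded case of the statement, namely $b_2 = 0$, $y_1 = b_1$ and $a_2 = z_2$, is \emph{exactly} the condition that all three coefficients of this linear relation vanish; equivalently, it is the unique situation in which~\eqref{eq:sys-3} imposes no constraint on $(s,t)$. So under the hypothesis of the lemma the displayed equation is a genuine (nontrivial) linear relation, which I would use to eliminate one variable from the quadratic~\eqref{eq:sys-1}, splitting according to which coefficient is nonzero. If $b_2 \neq 0$, solve for $s = \alpha t + \beta$ with $\alpha = b_2^{-1}(b_1 + b_2 + y_1)$; if $b_2 = 0$ but $b_1 + y_1 \neq 0$, solve for a fixed value of $t$; and if $b_2 = 0 = b_1 + y_1$ while $a_2 \neq z_2$, the relation has no solution at all and $\tau_P(\U) \cap \ell = \emptyset$.

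It then remains to confirm that elimination turns~\eqref{eq:sys-1} into a genuine quadratic in the surviving variable. When $b_2 = 0$, substituting the fixed value of $t$ leaves the $s^2$-term of~\eqref{eq:sys-1} untouched, so the leading coefficient is $1 \neq 0$ and there are at most two values of $s$. The delicate case is $b_2 \neq 0$: substituting $s = \alpha t + \beta$ into~\eqref{eq:sys-1} yields a polynomial in $t$ whose coefficient of $t^2$ is $\alpha^2 + \alpha + \delta$, the terms $s^2$, $st$ and $\delta t^2$ contributing $\alpha^2$, $\alpha$ and $\delta$ respectively in characteristic two. Here the defining property of $\delta$ is essential: since $\trace_{\F_q/\F_2}(\delta) = 1$, the polynomial $X^2 + X + \delta$ has no root in $\F_q$, so $\alpha^2 + \alpha + \delta \neq 0$ for every $\alpha \in \F_q$. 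Thus the eliminated equation is genuinely quadratic in $t$ and has at most two roots.

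This non-degeneracy is the crux of the argument, and I expect it to be the main (if brief) obstacle: one must rule out the leading coefficient collapsing, which would otherwise permit up to $q$ solutions and wreck the bound, and it is precisely the trace-one hypothesis on $\delta$ that prevents this; everything else is bookkeeping across the three cases. Assembling them, every pair $(s,t)$ satisfying~\eqref{eq:sys-1} and~\eqref{eq:sys-3} — and hence every foot of $P$ on $\ell$ — is one of at most two, giving $|\tau_P(\U) \cap \ell| \leq 2$ as claimed.
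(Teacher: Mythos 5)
Your proof is correct, but it takes a genuinely different route from the paper's. The paper keeps the linear equation~\eqref{eq:sys-2-easy-case} and the $\sigma$-equation~\eqref{eq:sys-3-easy-case} and discards the conic: it observes that \eqref{eq:sys-3-easy-case} is an affine section of a Tits ovoid, hence an oval with $q+1$ points projectively equivalent to $\mathcal{D}_\sigma$, and a line meets an oval in at most two points --- no computation needed. You instead keep the linear equation and the conic~\eqref{eq:sys-1}, discard the $\sigma$-equation, and eliminate a variable; the crux, as you rightly flag, is that the leading coefficient $\alpha^2 + \alpha + \delta$ cannot vanish because $\trace_{\F_q/\F_2}(\delta) = 1$, together with the routine $b_2 = 0$ cases. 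Both arguments are sound, and both identify the excluded case of the statement as precisely the identical vanishing of the linear relation. What the paper's route buys is brevity (the oval fact replaces your case analysis and trace computation) and continuity with the rest of Section~4, where the oval viewpoint on~\eqref{eq:sys-3-easy-case} is reused (Lemma~\ref{lem:oval-parameterisation}, Corollary~\ref{cor:rough-bound}); what your route buys is elementarity --- you never need to know that plane sections of Tits ovoids are translation ovals, only that a substituted quadratic with nonzero leading coefficient has at most two roots. It is worth noting that your argument is essentially the line-plus-conic argument of Barwick and Ebert which the paper's Remark~\ref{rem:barwick-remark} criticizes as incomplete: their gap was exactly the case where the linear equation vanishes identically, which this lemma excludes by hypothesis, and your trace check supplies the non-degeneracy needed to make the line-plus-conic approach rigorous in all remaining cases.
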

\begin{proof} From the description given in \eqref{feetex}, we see that
	the points \(P_{r, s, t} \in \tau_{P}(\U)\) satisfy \(s^{\sigma + 2} + t^{\sigma} + st = y_{1}t + z_{2}\), and this equation has $q+1$ solutions. Substituting this
	into equation~\eqref{eq:sys-2}, the points \(P_{r, s, t} \in \tau_P(\U) \cap \ell\) have \(s, t\) satisfying
	\begin{align}
		s^{2} + \delta t^{2} + st + \left( y_{1} + b_{1} \right)s + \left( y_{1} + b_{2} \delta \right) t  + a_{1} & = 0 \label{eq:sys-1-easy-case}  \\
		b_{2} s + \left( y_{1} + b_{1} + b_{2}  \right)t + a_{2} + z_{2}                                           & = 0 \label{eq:sys-2-easy-case}  \\
		s^{\sigma + 2} + t^{\sigma} + st + y_{1}t + z_{2}                                                          & = 0. \label{eq:sys-3-easy-case}
	\end{align}
 Recall that the points $(1,s,t,s^{\sigma + 2}+t^\sigma+ st)$, where $s,t\in\F_q$ are the $q^2$ affine points of a Tits ovoid. Hence, \eqref{eq:sys-3-easy-case} represents an affine section of a Tits ovoid. Since it has $q+1$ points, it is an oval projectively equivalent to the translation oval \(\mathcal{D}_{\sigma} = \left\{(1, t, t^{\sigma})\,|\, t \in \mathbb{F}_q\right\}\).
	Unless \(b_{2} = 0\) and \(y_{1} = b_{1}\), equation~\eqref{eq:sys-2-easy-case}
	represents a line in \(\AG(2, q)\) which meets the oval \eqref{eq:sys-3-easy-case} in at most two points, so we have at most two solutions to the
	system. If \(b_{2} = 0\), \(y_{1} = b_{1}\), and \(a_{2} \neq z_{2}\), then
	equation~(\ref{eq:sys-2-easy-case}) has no solutions.
\end{proof}
\begin{remark}\label{rem:barwick-remark}
	Lemma~\ref{lem:easy-case} is a refinement of~\cite[Theorem 4.33]{barwick09_unital_projec_planes}, where Barwick and Ebert rework Ebert's
	earlier proof in~\cite{ebert97_bueken_tits_unital} that the feet of a point
	\(P \notin (\ell_{\infty} \cup \U)\) are not collinear. This reworked proof asserts
	that the feet cannot be collinear because the line given by
	equation~\eqref{eq:sys-2-easy-case} and the conic from
	equation~\eqref{eq:sys-1-easy-case} cannot have \(q + 1\) common solutions. However, we can see
	 that this logic is not complete, and leaves an interesting case to examine
	when equation~\eqref{eq:sys-2-easy-case} vanishes. Ebert's original proof
	in~\cite{ebert97_bueken_tits_unital} does not contain this error, instead
	arguing that equations~\eqref{eq:sys-1-easy-case} and~\eqref{eq:sys-3-easy-case}
	cannot have \(q + 1\) common solutions.
\end{remark}
It follows from Lemma~\ref{lem:easy-case} that the feet of a point
\(P \notin (\ell_{\infty} \cup \U)\) is a set of \(q + 1\) points such that every
line meets \(\tau_P(\U)\) in at most two points except for a set of \(q\)
concurrent lines.

To this end, assume that \(b_{2} = 0\), \(y_1 = b_1\) and \(a_2 = z_2\). In this case,
equation~\eqref{eq:sys-2-easy-case} vanishes. The system describing
\(\ell \cap \tau_P(\U)\) is thus
\begin{align}
	s^{2} + \delta t^{2} + st        & =  y_{1}  t + a_{1}\label{eq:sys-1-simple} \\
	s^{\sigma + 2} + t^{\sigma} + st & = y_{1} t + z_{2}\label{eq:sys-2-simple}.
\end{align}
The lines that produce these cases are the lines with dual coordinates
\([a_{1} + z_{2}\epsilon, y_{1}, 1]\). These lines are concurrent at the point
\((0, 1, y_{1})\) which lies on \(\ell_{\infty}\). We will show in Corollary \ref{cor:rough-bound} that these latter lines meet
\(\tau_P(\U)\) in at most four points. 

We require the following lemma, which adapts arguments found in~\cite[Lemma
	2.1]{ceria21_mds}.
\begin{lemma}\label{lem:nucleus-argument}
	Let \(\mathcal{O}\) be a translation oval in \(\PG(2, q)\) projectively
	equivalent to \(\mathcal{D}_{\sigma}\), and let \(\mathcal{C}\) be a
	non-degenerate conic. If the nucleus of \(\mathcal{O}\) is also the nucleus of
	\(\mathcal{C}\), then \(|\mathcal{O} \cap \mathcal{C}| \leq 4\).
\end{lemma}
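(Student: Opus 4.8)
The plan is to normalise the oval, read off the equation of the conic from the shared-nucleus hypothesis, and then collapse the resulting intersection equation---which is a priori an additive (linearised) equation with potentially many roots---into an honest polynomial of degree $4$ by one Frobenius manipulation.

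First I would reduce to a normal form. Since any projectivity of \(\PG(2,q^2)\) sends conics to conics, nuclei to nuclei, and preserves intersection cardinalities, I may assume \(\mathcal{O} = \mathcal{D}_\sigma = \{(1,t,t^\sigma)\mid t\in\mathbb{F}_q\}\cup\{(0,0,1)\}\). A short tangent computation shows that in characteristic \(2\) every line with middle dual-coordinate \(0\) is tangent to \(\mathcal{D}_\sigma\) (here one uses that \(\gcd(\sigma-1,q-1)=1\), so \(x\mapsto x^{\sigma-1}\) is a bijection of \(\mathbb{F}_q^\ast\)), whence the nucleus of \(\mathcal{D}_\sigma\) is \((0,1,0)\). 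By hypothesis \(\mathcal{C}\) then also has nucleus \((0,1,0)\); computing the polarity of a general conic and imposing that \((0,1,0)\) spans the radical forces the equation of \(\mathcal{C}\) to be \(\alpha x^2 + \beta y^2 + \gamma z^2 + \mu xz = 0\), and non-degeneracy with nucleus \((0,1,0)\) is equivalent to \(\beta\neq 0\) and \(\mu\neq 0\), with \(\alpha,\gamma\) arbitrary.

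Next I would translate the intersection into a single equation. Substituting \((1,t,t^\sigma)\) into \(\mathcal{C}\) gives \(\alpha + \beta t^2 + \gamma t^{2\sigma} + \mu t^\sigma = 0\). Putting \(w = t^\sigma\) (a bijection of \(\mathbb{F}_q\)) and using that on \(\mathbb{F}_q\) the map \(\sigma^2\) is \(x\mapsto x^2\) (so that \(t^{2\sigma}=w^2\) and \(t^2=w^\sigma\)) turns this into the equation \(\beta w^\sigma + \gamma w^2 + \mu w + \alpha = 0\), which I will call \((\mathrm{I})\); moreover \((0,0,1)\in\mathcal{C}\) if and only if \(\gamma=0\). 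Thus the affine points of \(\mathcal{O}\cap\mathcal{C}\) correspond bijectively to the solutions \(w\in\mathbb{F}_q\) of \((\mathrm{I})\). The delicate point is that \((\mathrm{I})\) is additive-plus-constant and could naively have up to \(2^{e+1}\) solutions, so the whole content of the lemma lies in the following reduction, which I regard as the main step and the main obstacle to spot.

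From \((\mathrm{I})\) write \(w^\sigma = \beta^{-1}(\gamma w^2 + \mu w + \alpha) =: S(w)\), a polynomial of degree \(2\). Raising this identity to the power \(\sigma/2 = 2^e\) and using \((w^\sigma)^{2^e} = w^{2^{2e+1}} = w^q = w\) yields
\[
w = \beta^{-2^e}\bigl(\gamma^{2^e} w^\sigma + \mu^{2^e} w^{2^e} + \alpha^{2^e}\bigr);
\]
substituting \(w^\sigma = S(w)\) and solving for \(w^{2^e}\) (legitimate since \(\mu\neq 0\)) expresses \(w^{2^e} = R(w)\) for an explicit polynomial \(R\) of degree \(2\). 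Finally, since \(w^\sigma = (w^{2^e})^2 = R(w)^2\) while also \(w^\sigma = S(w)\), every intersection parameter satisfies the polynomial equation \(R(w)^2 = S(w)\). Its \(w^4\)-coefficient equals \(\bigl(\mu^{-2^e}\beta^{-1}\gamma^{2^e+1}\bigr)^2\), which is nonzero exactly when \(\gamma\neq 0\); in that case the equation has degree \(4\), so there are at most four affine intersection points and none at infinity, giving \(|\mathcal{O}\cap\mathcal{C}|\leq 4\). When \(\gamma=0\) the same equation has degree \(2\), yielding at most two affine points together with \((0,0,1)\), hence at most three points. Either way \(|\mathcal{O}\cap\mathcal{C}|\leq 4\), as required.
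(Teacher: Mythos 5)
Your proof is correct and follows essentially the same route as the paper: reduce to \(\mathcal{O} = \mathcal{D}_{\sigma}\) with nucleus \((0,1,0)\), write the shared-nucleus conic as \(\alpha x^2 + \beta y^2 + \gamma z^2 + \mu xz = 0\), and use one application of the \(\sigma/2\)-power Frobenius to convert the intersection condition (a priori a \(\sigma\)-twisted equation) into a genuine polynomial of degree \(4\) when \(\gamma \neq 0\), and of degree \(2\) (plus the point \((0,0,1)\)) when \(\gamma = 0\). Your substitution \(w = t^{\sigma}\) and the detour through \(w^{2^e} = R(w)\) is only a cosmetic reorganisation of the paper's direct step of solving the raised equation for \(t^{\sigma}\) and substituting back.
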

\begin{proof}
	Without loss of generality we may take
	\(\mathcal{O} = \mathcal{D}_{\sigma}\), so that the nucleus of \(\mathcal{O}\)
	is \(N = (0, 1, 0)\). If \(N\) is also the nucleus of \(\mathcal{C}\), then
	\(\mathcal{C}\) is a conic of the following form,

	\begin{equation}\label{eq:conic-equation}
		a_1 x^2 + a_2 y^2 + a_3 z^2 + x z = 0,
	\end{equation}
	for some \(a_1, a_2, a_3 \in \mathbb{F}_q\) with \(a_{2} \neq 0\). Suppose
	that \((0, 0, 1) \notin \mathcal{C}\). Then \(a_3 \neq 0\), and the point
	\((1, t, t^{\sigma}) \in \mathcal{O}\) if and only if \(t\) satisfies
	\begin{equation}\label{eq:sigmas}
		a_1 + a_2 t^2 + a_3 t^{2 \sigma} + t^\sigma = 0,
	\end{equation}
	that is
	\begin{equation}
		0 = {\left( a_1 + a_2 t^{2} + a_3 t^{2 \sigma} + t^{\sigma} \right)}^{\sigma/2} = a_{1}^{\sigma/2} + a_2^{\sigma/2} t^\sigma + t^2 + t.
	\end{equation}
	Therefore,
	\begin{equation}
		t^\sigma = {\left( \frac{a_3}{a_2} \right)}^{2^{e}} t^{2} + \frac{1}{a_2^{2^e}} t + {\left(\frac{a_1}{a_2}\right)}^{2^{e}}
	\end{equation}
	and substituting into equation \eqref{eq:sigmas}, we find that this
	equation has at most four solutions. If instead \((0, 0, 1) \in \mathcal{C}\),
	then \(a_3 = 0\) and arguing as above we find that
	equation \eqref{eq:sigmas} has at most two solutions, so
	\(|\mathcal{O} \cap \mathcal{C}| \leq 3\).
\end{proof}
\begin{corollary}\label{cor:rough-bound}
	The feet of a point \(P \notin \left( \ell_{\infty} \cup \U \right)\) meet a
	line \(\ell\) in at most four points.
\end{corollary}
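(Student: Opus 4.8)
The plan is to combine Lemma~\ref{lem:easy-case} with the reduction already carried out before this corollary. By Lemma~\ref{lem:orb-reps} we may take \(P = (1, y_1, z_2\epsilon)\), and Lemma~\ref{lem:easy-case} shows that every line meets \(\tau_P(\U)\) in at most two points, with the sole possible exception of the lines with dual coordinates \([a_1 + z_2\epsilon, y_1, 1]\). For those lines, \(\ell \cap \tau_P(\U)\) is governed by the system~\eqref{eq:sys-1-simple}--\eqref{eq:sys-2-simple}, so it remains only to bound the number of its solutions by four. Working in the affine \((s,t)\)-plane with projective closure \(\PG(2, q)\) coordinatised by \((x_0 : s : t)\), I read~\eqref{eq:sys-1-simple} as a conic \(\mathcal{C}\) and~\eqref{eq:sys-2-simple} as the oval \(\mathcal{O}\); the points of \(\ell \cap \tau_P(\U)\) then correspond exactly to \(\mathcal{C} \cap \mathcal{O}\). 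Since Lemma~\ref{lem:easy-case} already identifies \(\mathcal{O}\) as an affine section of a Tits ovoid, hence a translation oval projectively equivalent to \(\mathcal{D}_{\sigma}\), the entire situation is tailored to an application of Lemma~\ref{lem:nucleus-argument}, and it suffices to show \(|\mathcal{C} \cap \mathcal{O}| \leq 4\).

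The core of the argument is to verify the two hypotheses of Lemma~\ref{lem:nucleus-argument}: that \(\mathcal{C}\) is a non-degenerate conic, and that \(\mathcal{C}\) and \(\mathcal{O}\) have the same nucleus. For non-degeneracy I would inspect the quadratic part \(s^2 + st + \delta t^2\) of~\eqref{eq:sys-1-simple}; its \(st\)-coefficient is nonzero and, because \(\delta\) has absolute trace one, the form is anisotropic, so \(\mathcal{C}\) is a genuine conic whenever \(a_1 \neq y_1^2\). To compute the nuclei I exploit that \(q\) is even and that the exponents \(\sigma\) and \(\sigma + 2\) are both even: differentiating \(F(s, t) = s^{\sigma + 2} + t^{\sigma} + st + y_1 t + z_2\) annihilates the two leading terms, leaving \(\partial F/\partial s = t\) and \(\partial F/\partial t = s + y_1\). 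Hence the tangent to \(\mathcal{O}\) at a point \((s_1, t_1)\) is \(t_1 s + (s_1 + y_1) t + y_1 t_1 x_0 = 0\), and since the points of an oval affinely span the plane, the only point lying on all these tangents is \((1 : y_1 : 0)\); this is the nucleus of \(\mathcal{O}\). The analogous derivative computation applied to the homogenisation \(s^2 + \delta t^2 + st + y_1 t x_0 + a_1 x_0^2\) of~\eqref{eq:sys-1-simple} gives \((1 : y_1 : 0)\) as the nucleus of \(\mathcal{C}\) as well, independently of \(a_1\). With both hypotheses confirmed, Lemma~\ref{lem:nucleus-argument} yields \(|\mathcal{C} \cap \mathcal{O}| \leq 4\). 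In the remaining degenerate case \(a_1 = y_1^2\), translating to \((1:y_1:0)\) turns the conic into \(s'^2 + s't + \delta t^2 = 0\), which by anisotropy is the single point \((1:y_1:0)\); this point lies on \(\mathcal{O}\) only when \(y_1^{\sigma + 2} = z_2\), that is, only when \(P \in \U\), which is excluded, so here \(\mathcal{C} \cap \mathcal{O} = \emptyset\).

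I expect the main obstacle to be the nucleus computation, and in particular the observation that the evenness of \(\sigma\) and \(\sigma+2\) is exactly what collapses the tangent lines of \(\mathcal{O}\) to pass through the same point as the nucleus of \(\mathcal{C}\); once this coincidence is in place the bound is immediate from Lemma~\ref{lem:nucleus-argument}. A secondary subtlety is the bookkeeping for the degenerate conic, which is dispatched above using only \(P \notin \U\).
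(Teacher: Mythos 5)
Your overall strategy is the same as the paper's: reduce to the system \eqref{eq:sys-1-simple}--\eqref{eq:sys-2-simple} via Lemma~\ref{lem:easy-case}, treat the degenerate conic separately, and then apply Lemma~\ref{lem:nucleus-argument} after showing that the conic and the oval have the same nucleus. Your analysis of the conic side is correct: its nucleus is the point $s=y_1$, $t=0$, it is non-degenerate exactly when $a_1\neq y_1^2$, and your disposal of the case $a_1=y_1^2$ (anisotropy of $s^2+st+\delta t^2$ plus the exclusion $z_2\neq y_1^{\sigma+2}$, i.e.\ $P\notin\U$) is sound and in fact more explicit than the paper's one-line remark. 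The genuine gap is on the oval side. You compute the ``tangent'' to $\mathcal{O}$ at $(s_1,t_1)$ by formally differentiating $F(s,t)=s^{\sigma+2}+t^{\sigma}+st+y_1t+z_2$, i.e.\ you take the tangent line of the algebraic curve $F=0$. But $\mathcal{O}$ is the set of $\F_q$-rational points of a curve of degree $\sigma+2\geq 6$, and for a curve of degree greater than two the gradient line at a rational point can perfectly well contain further rational points of the curve (the tangent of a plane cubic meets the cubic again); in that case it is a \emph{secant} of the point set $\mathcal{O}$, not its tangent in the oval sense, which is the notion Lemma~\ref{lem:nucleus-argument} needs (the nucleus is the common point of the lines meeting the oval exactly once). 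Moreover, gradient lines depend on the defining polynomial, not just on the point set: replacing $F$ by $F+(s^q+s)G$ leaves the rational zero set unchanged but alters the gradients at its points, so no general principle can equate ``gradient line'' with ``oval tangent''; this is a claim about this specific $F$ that must be proved. As it stands your argument is circular at precisely this point: knowing that every oval tangent passes through $(y_1,0)$ is equivalent to knowing that $(y_1,0)$ is the nucleus, which is what is to be shown.

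The paper closes exactly this gap by a short direct computation: the two lines $t=0$ and $s=y_1$ pass through $N=(y_1,0)$ and each meets $\mathcal{O}$ in exactly one point, because $x\mapsto x^{\sigma+2}$ and $x\mapsto x^{\sigma}$ are permutations of $\F_q$; since two distinct tangent lines of an oval in even characteristic meet only in the nucleus, $N$ is the nucleus of $\mathcal{O}$. Note that these two lines are precisely your gradient lines at the two oval points lying on them, so your differentiation does single out the right pencil of lines; what is missing is a verification that at least two of these gradient lines genuinely meet $\mathcal{O}$ only once. Inserting the two one-line checks above (or any equivalent argument) would repair your proof; without them, the key hypothesis of Lemma~\ref{lem:nucleus-argument} is unverified.
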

\begin{proof}From Lemma \ref{lem:easy-case}, we know we can restrict ourselves to the case $b_2=0,y_1=b_1,a_2=z_2$ which means we are looking at the
	points \(P_{r, s, t} \in \tau_P(\U) \cap \ell\) have \(s, t\)
	satisfying
	\begin{align}
		s^{2} + \delta t^{2} + st        & =  y_{1}  t + a_{1}\label{eq:conic} \\
		s^{\sigma + 2} + t^{\sigma} + st & = y_{1} t + z_{2}\label{eq:oval},
	\end{align}
	 where equation \eqref{eq:conic} represents a conic $\mathcal{C}$, and equation \eqref{eq:oval} represents an oval $\mathcal{O}$ in \(\AG(2, q)\). If the conic is degenerate, it's easy to see that the oval and conic have at most four points in common. So we may assume that the conic is non-degenerate.
  The nucleus of $\mathcal{C}$ is \(N = (y_1, 0, 1)\). We now show that \(N\) is the nucleus of the oval $\mathcal{O}$.   The line \(t = 0\) goes through $N$ and
	meets the oval~\eqref{eq:oval} when \(s^{\sigma + 2} = z_{2}\), which has one
	solution as \(\sigma + 2\) is a permutation of \(\mathbb{F}_{q}\). The line
	\(s + y_{1} = 0\) through $N$ meets the oval~\eqref{eq:oval} when
	\(t^{\sigma} = y^{\sigma + 2} + z_{2}\) which has one solution for $t$. Therefore, \(N\) is the nucleus, as it is the intersection of two tangent lines to the oval.
	It now follows from
	Lemma~\ref{lem:nucleus-argument} that 	equations~\eqref{eq:conic}
	and~\eqref{eq:oval} have at most four common solutions.
\end{proof}

We now show the existence of a point \(P \notin (\U \cup \ell_{\infty})\) and a line
\(\ell\) such that \(|\ell \cap \tau_{P}(\U)| = 3\), and demonstrate that our bound is sharp.
\begin{lemma}\label{lem:oval-parameterisation}
	Let \(y_{1} = 0\), then the points of the oval given by
	equation~\eqref{eq:oval} are
	\begin{equation}
		\left\{P_{u} = \left(\frac{z_{2}^{1 -
					\sigma/2}u^{\sigma}}{1 + u + u^{\sigma}}, \frac{z_{2}^{\sigma/2}(1 + u^{\sigma})}{1 + u + u^{\sigma}}\right)\,\middle|\,u \in \mathbb{F}_q \right\} \cup \left\{\left(z_{2}^{1 - \sigma/2},
		z_{2}^{\sigma/2}\right)\right\}.
	\end{equation}
\end{lemma}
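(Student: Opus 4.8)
The plan is to verify directly that each listed point satisfies the defining equation of $\mathcal{O}$, and then to show that the list exhausts $\mathcal{O}$ by a counting argument. Setting $y_1 = 0$ in equation~\eqref{eq:oval}, the oval $\mathcal{O}$ is the set of $(s,t) \in \AG(2,q)$ with
\begin{equation*}
s^{\sigma+2} + t^\sigma + st = z_2,
\end{equation*}
and from the discussion preceding the lemma we already know $|\mathcal{O}| = q+1$ (and that $z_2 \neq 0$, since $z_2 \neq y_1 = 0$). Writing $D = 1 + u + u^\sigma$, the candidate point $P_u$ has $s = z_2^{1-\sigma/2}u^\sigma/D$ and $t = z_2^{\sigma/2}(1+u^\sigma)/D$.

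First I would check the parameterisation is well defined, i.e. that $D \neq 0$ for every $u \in \mathbb{F}_q$. The map $u \mapsto u + u^\sigma$ is additive, and since raising to a power of $2$ preserves the absolute trace we have $\trace_{\mathbb{F}_q/\mathbb{F}_2}(u + u^\sigma) = 0$ for all $u$, so its image lies in the hyperplane $\ker \trace_{\mathbb{F}_q/\mathbb{F}_2}$. As $\trace_{\mathbb{F}_q/\mathbb{F}_2}(1) = 2e+1 = 1$, the value $1$ is not in this image, hence $u + u^\sigma = 1$ has no solution and $D$ never vanishes.

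The heart of the proof is the substitution. Using $z_2^{1-\sigma/2}z_2^{\sigma/2} = z_2$ and $(z_2^{\sigma/2})^\sigma = z_2^{q} = z_2$, one pulls a common factor $z_2$ out of each of $s^{\sigma+2}$, $t^\sigma$ and $st$, so that $P_u \in \mathcal{O}$ is equivalent, after clearing the powers of $D$, to the polynomial identity
\begin{equation*}
u^{2\sigma+2} + (1+u^2)D^2 + u^\sigma(1+u^\sigma)D^\sigma = D^{\sigma+2}
\end{equation*}
in $\mathbb{F}_q[u]$, where I have used that $\sigma^2$ induces $x \mapsto x^2$ on $\mathbb{F}_q$ (so $u^{\sigma^2}=u^2$ and $D^\sigma = 1 + u^\sigma + u^2$). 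Expanding both sides with $(a+b+c)^2 = a^2+b^2+c^2$ in characteristic two and collecting monomials verifies the identity. This exponent bookkeeping with $\sigma$, $\sigma/2$ and $\sigma^2$ is the main, though routine, obstacle.

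Finally I would confirm the list has exactly $q+1$ distinct points. The map $u \mapsto P_u$ is injective: multiplying the defining relations $sD = z_2^{1-\sigma/2}u^\sigma$ and $tD = z_2^{\sigma/2}(1+u^\sigma)$ by $z_2^{\sigma/2-1}$ and $z_2^{-\sigma/2}$ respectively and subtracting gives $D\bigl(z_2^{-\sigma/2}t + z_2^{\sigma/2-1}s\bigr) = 1$ (recall $\mathrm{char}=2$), which determines $D$ from $(s,t)$ and hence $u^\sigma = z_2^{\sigma/2-1}sD$, from which $u$ is recovered since $x \mapsto x^\sigma$ is a bijection. The extra point $(z_2^{1-\sigma/2}, z_2^{\sigma/2})$ is the limit of $P_u$ as $u \to \infty$ and is not of the form $P_u$: matching its $s$-coordinate forces $u^\sigma = D$, i.e. $u = 1$, which gives $t = 0 \neq z_2^{\sigma/2}$. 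Thus the listed set consists of $q+1$ distinct points all lying on $\mathcal{O}$, and since $|\mathcal{O}| = q+1$ it is all of $\mathcal{O}$.
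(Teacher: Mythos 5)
Your proof is correct and follows essentially the same route as the paper: direct substitution to verify that the listed points lie on the oval, combined with the trace argument showing \(1 + u + u^{\sigma} \neq 0\). The only difference is one of completeness rather than method --- you make explicit the injectivity of \(u \mapsto P_u\), the distinctness of the extra point, and the final count against \(|\mathcal{O}| = q+1\), steps which the paper (which works with the homogenised equation for the projective points \(\overline{P_u}\)) leaves implicit.
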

\begin{proof}
	If \(y_{1} = 0\), then equation~\eqref{eq:oval} reduces to
	\begin{equation}
		\label{eq:oval-y1-zero}
		s^{\sigma + 2} + t^{\sigma} + st + z_{2} = 0
	\end{equation}
	Using the properties of \(\sigma\) described in
	Section~\ref{sec:preliminaries}, one can show the point
	\((z_2^{1-\sigma/2}, z_{2}^{\sigma/2})\) satisfies equation~\eqref{eq:oval-y1-zero}. Furthermore, the
	points
	\(\overline{P_u} = (z_{2}^{1 - \sigma/2}u^{\sigma}, z_{2}^{\sigma/2}(1 + u^{\sigma}), 1 + u + u^{\sigma})\),
	where \(u \in \mathbb{F}_q\), are projective points satisfying the following
	homogeneous equation
	\begin{equation}
		x^{\sigma + 2} + y^{\sigma}z^{2} + xyz^{\sigma} +
		z_{2}z^{\sigma + 2} = 0.
	\end{equation}
	Because \(\trace(u + u^{\sigma}) = 0\), and \(\trace(1) = 1\) when
	\(q = 2^{2e + 1}\),  we have \(u^{\sigma} + u+ 1 \neq 0\) for all
	\(u \in \mathbb{F}_q\). Thus, normalising so \(z = 1\), the points
	\(\overline{P_u}\) have the form \((s, t, 1)\) where \(s\) and \(t\) satisfy
	equation~\eqref{eq:oval-y1-zero}.
\end{proof}
\begin{corollary}\label{cor:oval-polynomial}
	Let \(y_{1} = 0\) and consider the points \(P_u\) as described in
	Lemma~\ref{lem:oval-parameterisation}. A point \(P_{u}\) lies on the conic given by
	equation~\eqref{eq:conic}, if and only if \(u\) is a root of the following polynomial
	\begin{equation}\label{final}
		a_{1}^{\sigma/2}u^{\sigma} + (z_{2}^{\sigma - 1} +
		\delta^{\sigma/2}z_{2} + z_{2}^{\sigma/2} +
		a_{1}^{\sigma/2})u^{2} + z_{2}^{\sigma/2}u +
		\delta^{\sigma/2} z_{2} + a_{1}^{\sigma/2}
	\end{equation}
\end{corollary}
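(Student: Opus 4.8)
The plan is to substitute the explicit parameterisation of the oval $\mathcal{O}$ from Lemma~\ref{lem:oval-parameterisation} into the conic equation~\eqref{eq:conic} and simplify the resulting condition on the parameter $u$ until it matches~\eqref{final}. With $y_1=0$, equation~\eqref{eq:conic} reads $s^2+\delta t^2 + st + a_1 = 0$. I would write $D = 1+u+u^\sigma$, which is nonzero for every $u\in\mathbb{F}_q$ by Lemma~\ref{lem:oval-parameterisation}, and substitute $s = z_2^{1-\sigma/2}u^\sigma/D$ and $t = z_2^{\sigma/2}(1+u^\sigma)/D$. Clearing the common factor $D^2$ and using that we work in characteristic two, so that $D^2 = 1+u^2+u^{2\sigma}$ and $(1+u^\sigma)^2 = 1+u^{2\sigma}$, I expect the membership condition $P_u\in\mathcal{C}$ to become $Q(u)=0$, where
\begin{equation*}
  Q(u) = (z_2^{2-\sigma}+\delta z_2^\sigma + z_2 + a_1)\,u^{2\sigma} + z_2\,u^\sigma + a_1\,u^2 + (\delta z_2^\sigma + a_1).
\end{equation*}

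The key observation is then that $Q(u)$ is carried onto the polynomial~\eqref{final} by the field automorphism $x\mapsto x^{\sigma/2}$. Since $\sigma/2 = 2^e$ is a power of two this map is additive, and the exponent identities $\sigma^2 = 2q$ and $\sigma\cdot(\sigma/2)=q$, combined with $u^q = u$ and $z_2^q = z_2$ (both valid as $u,z_2\in\mathbb{F}_q$), give $u^{2\sigma\cdot\sigma/2}=u^{\sigma^2}=u^2$, $u^{\sigma\cdot\sigma/2}=u^q=u$ and $u^{2\cdot\sigma/2}=u^\sigma$. Applying $x\mapsto x^{\sigma/2}$ term by term to $Q(u)$ and simplifying the coefficients, for instance $(z_2^{2-\sigma})^{\sigma/2}=z_2^{\sigma-q}=z_2^{\sigma-1}$ and $(\delta z_2^\sigma)^{\sigma/2}=\delta^{\sigma/2}z_2^q=\delta^{\sigma/2}z_2$, I expect to recover exactly~\eqref{final}.

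To finish, note that $x\mapsto x^{\sigma/2}$ is a bijection of $\mathbb{F}_q$ fixing $0$, so $Q(u)=0$ if and only if $Q(u)^{\sigma/2}=0$, that is, if and only if $u$ is a root of~\eqref{final}. Since $P_u$ lies on the conic~\eqref{eq:conic} precisely when $Q(u)=0$, this establishes the desired equivalence.

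I expect the main obstacle to be purely computational: correctly expanding $s^2+\delta t^2 + st$ over the denominator $D^2$ and collecting the coefficients of $1$, $u^\sigma$, $u^2$ and $u^{2\sigma}$ to arrive at $Q(u)$. The conceptual step, passing from $Q(u)$ to~\eqref{final} via the automorphism $x\mapsto x^{\sigma/2}$, is short once the exponent arithmetic $\sigma^2=2q$ and $\sigma\cdot(\sigma/2)=q$ is in place; its only hidden hypothesis is $z_2\neq 0$, which holds because $P\notin\U$ forces $z_2\neq y_1=0$.
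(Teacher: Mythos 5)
Your proposal is correct and follows essentially the same route as the paper: substituting the parameterisation of $P_u$ into the conic, clearing the denominator $(1+u+u^\sigma)^2$ to obtain the intermediate polynomial $Q(u)$ (which is exactly the paper's equation~\eqref{eq:oval-polynomial}), and then applying the bijection $x\mapsto x^{\sigma/2}$ to recover~\eqref{final}. Your exponent arithmetic ($\sigma^2=2q$, $\sigma\cdot\sigma/2=q$) and the observation that $z_2\neq 0$ are both sound, and merely make explicit what the paper leaves implicit.
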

\begin{proof}
	By directly substituting \(P_u\) into equation~\eqref{eq:conic} we have
	\begin{equation}
		\label{eq:oval-polynomial}
		(z_{2}^{2 - \sigma} + \delta z_{2}^{\sigma} + z_{2} +
		a_{1})u^{2\sigma} + z_{2}u^{\sigma} + a_{1}u^{2} +
		(\delta z_{2}^{\sigma} + a_{1}) = 0
	\end{equation}
	Raising both sides of equation~\eqref{eq:oval-polynomial} to the power of \(\sigma/2\)
	yields our result.
\end{proof}

\begin{theorem}\label{thm:bt-unital-line-meet-feet}
	Let \(U\) be a Buekenhout-Tits unital in \(\PG(2, q^{2})\). The feet of a
	point \(P \notin (\ell_{\infty} \cup U)\) meet a line \(\ell\) in at most four
	points. Moreover, there exists a line \(\ell\) and point \(P\) such that
	\(|\ell \cap \tau_{P}(U)| = k\) for each \(k \in \{0, 1, 2, 3,4\}\).
\end{theorem}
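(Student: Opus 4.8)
The bound $|\ell \cap \tau_P(U)| \le 4$ is exactly the content of Corollary~\ref{cor:rough-bound}, so the real work is to exhibit, for each $k \in \{0,1,2,3,4\}$, a point $P \notin (\ell_\infty \cup U)$ and a line $\ell$ with $|\ell \cap \tau_P(U)| = k$. By the orbit reduction following Lemma~\ref{lem:orb-reps} I may assume $P = (1, 0, z_2\epsilon)$ with $z_2 \neq 0$, and by Lemma~\ref{lem:easy-case} any line meeting the feet in more than two points lies in the pencil of $q$ lines with dual coordinates $[a_1 + z_2\epsilon, 0, 1]$, $a_1 \in \mathbb{F}_q$, all passing through $N = (0,1,0) \in \ell_\infty$. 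For such a line the intersection with $\tau_P(U)$ is governed, via Lemma~\ref{lem:oval-parameterisation} and Corollary~\ref{cor:oval-polynomial}, by the roots in $\mathbb{F}_q$ of the polynomial~\eqref{final} together with the single extra oval point $(z_2^{1-\sigma/2}, z_2^{\sigma/2})$.

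The key structural observation is that, since $\sigma = 2^{e+1}$, the monomials $u^\sigma, u^2, u$ appearing in~\eqref{final} are all $\mathbb{F}_2$-linear in $u$; hence~\eqref{final} has the shape $L(u) + c = 0$ with $L$ additive, so its solution set in $\mathbb{F}_q$ is either empty or a coset of $\ker L$. Consequently the number of parameterised points $P_u$ lying on the conic~\eqref{eq:conic} is $0$ or a power of $2$. A direct substitution shows the extra point lies on~\eqref{eq:conic} precisely when $a_1 = z_2^{2-\sigma} + \delta z_2^\sigma + z_2$, and it lies on exactly one line of the pencil. Combining this with the bound of Corollary~\ref{cor:rough-bound}, the total $|\ell \cap \tau_P(U)|$ can only equal $3$ when~\eqref{final} has exactly two roots and the extra point is present, and can only equal $4$ when~\eqref{final} has exactly four roots and the extra point is absent.

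For $k \in \{0,1,2\}$ I would argue from the global geometry of the $(q+1)$-point set $\tau_P(U)$, which is not collinear and which, by Lemma~\ref{lem:easy-case}, meets every line outside the pencil above in at most two points. Fixing a foot $Q$, exactly one of the $q^2 + 1$ lines through $Q$ belongs to the pencil, so every other line through $Q$ is tangent or a $2$-secant; since a single pencil line carries at most four feet while $q + 1 > 4$, there is a foot $Q'$ off the pencil line through $Q$, and the (non-pencil) line $QQ'$ is then a $2$-secant, giving $k = 2$. Comparing the incidence count $(q+1)(q^2+1)$ with the total number of lines shows that most lines through $Q$ are tangent ($k=1$) and that external lines exist ($k=0$).

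The main obstacle is realising $k = 3$ and $k = 4$, which requires producing explicit parameters. For $k = 3$ I would set $a_1 = z_2^{2-\sigma} + \delta z_2^\sigma + z_2$, forcing the extra point onto the conic, and choose $z_2$ so that~\eqref{final} retains exactly two roots in $\mathbb{F}_q$; for $k = 4$ I would instead choose $(z_2, a_1)$ off this extra-point locus so that the additive part $L$ of~\eqref{final} has a two-dimensional kernel meeting the constant coset, yielding four roots. Verifying that such choices exist for \emph{every} $e \ge 1$, rather than case-by-case in $q$, is the delicate point: it amounts to controlling the $\mathbb{F}_2$-dimension of $\ker L$ as the parameters vary, using that $\sigma \pm 1$ and $\sigma \pm 2$ permute $\mathbb{F}_q$, and I expect this bookkeeping with the linearised polynomial to be where the argument is most technical.
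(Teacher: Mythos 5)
Your reduction of the problem is sound and follows the paper's own route: the bound from Corollary~\ref{cor:rough-bound}, the restriction to $P=(1,0,z_2\epsilon)$ and to the pencil of lines $[a_1+z_2\epsilon,0,1]$ via Lemma~\ref{lem:easy-case}, the parameterisation of the oval from Lemma~\ref{lem:oval-parameterisation} and Corollary~\ref{cor:oval-polynomial}, and the correct observation that the solution set of \eqref{final} is empty or a coset of the kernel of an additive map (so, with Corollary~\ref{cor:rough-bound}, of size $0,1,2$ or $4$, never $3$). Your arguments for $k\in\{0,1,2\}$ are also fine (the paper gets $k=0,1$ even more cheaply, from secant and tangent lines through $P$ itself). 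But the proof has a genuine gap exactly where the theorem has its content: you never exhibit a point--line pair realising $k=3$ or $k=4$. You only record necessary conditions (extra point on the conic plus two roots of \eqref{final} for $k=3$; four roots and no extra point for $k=4$) and then explicitly defer the existence verification, calling it ``the delicate point''. Since the existence of a $3$-secant to a set of feet is precisely what separates Buekenhout-Tits from Buekenhout-Metz unitals, this is the theorem's main difficulty, not bookkeeping that can be left open.

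The paper closes this gap with explicit parameters, and the ``control of $\ker L$ for every $e\geq 1$'' that worries you dissolves once the parameters are chosen well. For $k=3$: take $z_2=1$, $a_1=\delta$ (which indeed lies on your extra-point locus $a_1=z_2^{2-\sigma}+\delta z_2^\sigma+z_2$); then \eqref{final} collapses to $\delta^{\sigma/2}u^\sigma+u=u(\delta^{\sigma/2}u^{\sigma-1}+1)=0$, which has exactly two roots because $\sigma-1$ permutes $\F_q$, and the extra point $(1,1)$ satisfies both equations, giving exactly three feet on $\ell:(\delta+\epsilon)x+z=0$. For $k=4$: take $z_2=\delta^{-\sigma}$, $a_1=\delta^{-1}$; this makes the constant term $\delta^{\sigma/2}z_2+a_1^{\sigma/2}$ of \eqref{final} vanish, so \eqref{final2} is a homogeneous $\F_2$-linearised polynomial whose root set is a subspace of size $1$, $2$ or $4$. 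Writing $a=\delta^{\sigma-1}+1$, two roots ($u=0$ and $u=a^{-(1+\sigma/2)}$) are explicit, and further roots come from $u^\sigma+au^2+1=0$, which after $z=(a^{\sigma/2+1}u)^2$ becomes $z^{\sigma/2}+z+a^{\sigma+1}=0$; by Menichetti's criterion this is solvable if and only if $\trace(a^{\sigma+1})=0$, and since $\delta=(a+1)^{\sigma+1}$ one has $\trace(\delta)=\trace(a^{\sigma+1})+1$, so the condition holds automatically because $\trace(\delta)=1$ by the very choice of $\delta$ in the coordinatisation. This trace identity is the idea your plan is missing: it settles the kernel-dimension question uniformly in $q$, with no case-by-case analysis in $e$.
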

\begin{proof}
  By Theorem~\ref{cor:proj-equivalence} we may assume that \(U = \U\).
  The first part of the proof comes from Corollary~\ref{cor:rough-bound}. Let
  \(P = (1, y_1, z_2\epsilon)\). All lines through \(P\) meet \(\tau_P(U)\) in at
  most one point by definition, so it is clear that there exists lines \(\ell\) such
  that \(|\ell \cap \tau_P(U)|\) is zero or one. Because the points of \(\tau_P(U)\)
  are not collinear, there exists a pair of points \(Q, R \in \tau_P(U)\) such
  that the line \(QR\) does not contain \((0, 1, y_1)\). Hence, the line \(QR\)
  meets in precisely two points by Lemma~\ref{lem:easy-case}. 
  
  Now consider a line
  \(\ell\) with equation \((\delta + \epsilon)x + z = 0\) and let $P$ be the point $(1,0,\epsilon)$
(that is,  \(a_1=\delta, a_2=1, b_1=b_2=y_{1} = 0, z_{2} = 1\)). The number of points of \(\ell \cap \tau_P(U)\) is the
  same as the number of solutions to equations~\eqref{eq:sys-1-simple} and
  \eqref{eq:sys-2-simple}. By Lemma~\ref{lem:oval-parameterisation} the points
  \(P_u\) satisfying equation~\eqref{eq:sys-2-simple} lie on the
  conic~\eqref{eq:sys-1-simple} when
  \begin{equation}
	\delta^{\sigma/2}u^{\sigma} + u = u(\delta^{\sigma/2}u^{\sigma - 1} + 1) = 0,
  \end{equation}
  which has two roots as \(\sigma - 1\) is a permutation of \(\mathbb{F}_q\).
  It can also be shown that \((z_2^{1-\sigma/2}, z_2^{\sigma/2}) = (1, 1)\)
  satisfies both equations. Hence, the intersection of the feet of the point
  \((1, 0, \epsilon)\) and \(\ell\) has exactly three points.
  
  Finally, consider the point $P(1,0,\frac{1}{\delta^\sigma}\epsilon)$ and the line $\ell$ with dual coordinates $[\frac{1}{\delta}+\frac{1}{\delta^2}\epsilon,0,1]$. By Corollary \ref{cor:oval-polynomial}, the number of feet of $P$ on the line $\ell$ is the number of solutions to the equation \eqref{final}, where $a_1=\frac{1}{\delta}$ and $z_2=\frac{1}{\delta^\sigma}$ which is  \begin{equation}\label{final2}\frac{1}{\delta^{\sigma/2}}u^{\sigma} + (\frac{1}{\delta^{2-\sigma}}+\frac{1}{\delta})u^2+\frac{1}{\delta}u=0.
  \end{equation}
  Since equation~\eqref{final2} is a $\F_2$-linearised polynomial, and there are at most $4$ roots, we have that equation~\eqref{final} has $1,2,$ or $4$ roots. We will show that, under the condition $\trace(\delta)=1$, it has four roots.
  Multiplying equation~\eqref{final2} by $\delta$ yields $\delta^{1-\sigma/2}u^\sigma+(\delta^{\sigma-1}+1)u^2+u=0$ and now substituting $a=\delta^{\sigma-1}+1$ gives
  \begin{equation}\label{h1}(a^{\sigma/2}+1)u^\sigma+au^2+u=0.\end{equation}
We find that \(u = 0\) and \(u = \frac{1}{a^{1 + \sigma/2}}\) are solutions to equation~\eqref{h1}. Now consider
   \begin{equation}
     \label{eq:h2}
     u^{\sigma} + au^2 + 1 = 0.
   \end{equation}
   Any solution to equation~\eqref{eq:h2} also satisfies $(u^{\sigma} + au^2 + 1)^{\sigma / 2} + u^{\sigma} + a u^2 + 1 = 0$ which is precisely equation~\eqref{h1}.
 Multiply equation~\eqref{eq:h2} with $a^{\sigma+1}$, then we find $(a^{\sigma/2+1}u)^\sigma+(a^{\sigma/2+1}u)^2+a^{\sigma+1}=0$, and letting \(z = (a^{\sigma/2 + 1}u)^2\),
\begin{equation}\label{eq:transform-eq} z^{\sigma/2}+z+a^{\sigma+1}=0,\end{equation}
which is known (see \cite{menichetti}) to have solutions if and only if $\trace(a^{\sigma+1})=0$. As \(z = 0\) and \(z = 1\) are not solutions of equation~\eqref{eq:transform-eq}, no solutions of equation~\eqref{eq:transform-eq} correspond to the solutions \(u = 0\) or \(u = \frac{1}{a^{1 + \sigma/2}}\) of equation~\eqref{final2}. Furthermore, recall that equation~\eqref{final2} has $1,2$ or $4$ solutions and that we have assumed that $\trace(\delta)=1$. Since $\delta^{\sigma-1}=a+1$, it follows that $\delta=(a+1)^{\sigma+1}$ and $\trace(\delta)=\trace(a^{\sigma+1}+a^\sigma+a+1)=\trace(a^{\sigma+1})+\trace(1)=\trace(a^{\sigma+1})+1$. Hence, the conditions $\trace(\delta)=1$ and $\trace(a^{\sigma+1})=0$ are equivalent,  and we find exactly four roots to equation~\eqref{final2}.
\end{proof}

\printbibliography%

\noindent
{\bf Address of the authors:}\\
Jake Faulkner \texttt{jake.faulkner@pg.canterbury.ac.nz}\\
\noindent
Geertrui Van de Voorde \texttt{geertrui.vandevoorde@canterbury.ac.nz}\\

\noindent
School of Mathematics and Statistics\\
University of Canterbury\\
Private bag 4800\\
8140 Christchurch\\
New Zealand
\end{document}